\newcommand{\xyR}[1]{
  \makeatletter
  \xydef@\xymatrixrowsep@{#1}
  \makeatother
}
\newcommand{\xyC}[1]{
  \makeatletter
  \xydef@\xymatrixcolsep@{#1}
  \makeatother
}
\newtheorem{theo}{Theorem}[section]
\newtheorem{lemma}[theo]{Lemma}
\newtheorem{remark}[theo]{Remark}
\newtheorem{problem}[theo]{Problem}
\def\unnumthm{\subsubsection*{Theorem}}
\def\chl{\subsubsection*{Theorem (Curtis-Hedlund-Lyndon)}}
\def\bhm{\subsubsection*{Theorem (Blanchard-Host-Maass) }}
\def\spt{\subsubsection*{The Small Period Theorem (Coven-Hedlund-Rhodes)}}
\theoremstyle{definition}
\newtheorem{definition}[theo]{Definition}
\newcommand{\intav}[1]{\mathchoice {\mathop{\vrule width 6pt height 3 pt depth  -2.5pt
\kern -8pt \intop}\nolimits_{\kern -6pt#1}} {\mathop{\vrule width
5pt height 3  pt depth -2.6pt \kern -6pt \intop}\nolimits_{#1}}
{\mathop{\vrule width 5pt height 3 pt depth -2.6pt \kern -6pt
\intop}\nolimits_{#1}} {\mathop{\vrule width 5pt height 3 pt depth
-2.6pt \kern -6pt \intop}\nolimits_{#1}}}
\newcommand{\intavl}[1]{\mathchoice {\mathop{\vrule width 6pt height 3 pt depth  -2.5pt
\kern -8pt \intop}\limits_{\kern -6pt#1}} {\mathop{\vrule width 5pt
height 3  pt depth -2.6pt \kern -6pt \intop}\nolimits_{#1}}
{\mathop{\vrule width 5pt height 3 pt depth -2.6pt \kern -6pt
\intop}\nolimits_{#1}} {\mathop{\vrule width 5pt height 3 pt depth
-2.6pt \kern -6pt \intop}\nolimits_{#1}}}
\begin{document}

\title[Large semigroups of cellular automata]{Large semigroups of cellular automata}

\author[Yair Hartman]{Yair Hartman}
\address{Weizmann Institute of Science, Faculty of Mathematics and Computer Science, POB 26, 76100, Rehovot, Israel.}
\email{yair.hartman@weizmann.ac.il}

\date{\today}

\begin{abstract}
In this article we consider semigroups of transformations of cellular automata which act on a fixed shift space.
In particular, we are interested in two properties of these semigroups which relate to {}``largeness''. The first property is ID and the
other property is maximal commutativity (MC). A semigroup has the ID property
if the only infinite invariant closed set (with respect to the semigroup
action) is the entire space. We shall consider two examples of semigroups: one is spanned by cellular automata transformations that represent multiplications by integers on the one-dimensional torus and the other one consists of all the cellular automata transformations which are
linear (when the symbols set is the ring of integers mod n). It will be shown that the two properties of these semigroups depend on the number of symbols s. The multiplication semigroup is ID and MC if and only if s is not a power of prime. The linear semigroup over the mentioned ring is always MC but is ID if and only if s is prime. When the symbol set is endowed with a finite field structure (when possible) the linear semigroup is both ID and MC. In addition, we associate with each semigroup which acts on
a one sided shift space a semigroup acting on a two sided shift space, and vice versa, in such a way that preserves the ID and the MC properties.
\end{abstract}

\maketitle

\section*{Introduction}

Consider the one dimensional torus $\mathbb{T}=\frac{\mathbb{R}}{\mathbb{Z}}$
and the maps $m_{u}\left(x\right)=\left(ux\right)_{mod\,1}$ for $u\in\mathbb{Z}$.
$\left(\mathbb{T},m_{u}\right)$ is a dynamical system which has many invariant
probability measures. The situation seems to be different if one considers
the multi-parameter dynamical system $\left(\mathbb{T},\Sigma\right)$
where $\Sigma=\left\{ m_{2}^{r}m_{3}^{s}\right\} _{r,s \in\mathbb{N}}$
(the semigroup spanned by both $m_{2}$ and $m_{3}$). In 1967, Furstenberg
asked the following question (Furstenberg's Conjecture): Is it true
that the only non-atomic ergodic $\left\{ m_{2}^{r}m_{3}^{s}\right\} $-invariant
measure is the Lebesgue measure? This conjecture has been extensively
studied, and yet it is still open in the general sense. Rudolph \cite{rudolph1990x2}
proved that any measure which is $\left\{ m_{2}^{r}m_{3}^{s}\right\} $-invariant
is a linear combination of the Lebesgue measure and measures with
zero entropy (with respect to $m_{2}$). For more developments around
the conjecture the reader is referred to \cite{lindenstrauss2005invariant}
or \cite{katok1996invariant}.

The topological version of the conjecture was proved by Furstenberg
in \cite{furstenberg1967disjointness}:
\unnumthm
\footnote{The original theorem is more general as it holds for non-lacunary
semigroups, as will be discussed later.%
}(Furstenberg) The only infinite closed subset of $\mathbb{T}$ which
is $\left\{ m_{2}^{r}m_{3}^{s}\right\} $-invariant is $\mathbb{T}$.
\medbreak

This theorem classifies the closed $\left\{ m_{2}^{r}m_{3}^{s}\right\} $-invariant
subsets of $\mathbb{T}$, or: the subsystems of the topological dynamical system $\left(\mathbb{T},\left\{m_2^{r},m_3^{s}\right\}\right)$. This property of $\left\{ m_{2}^{r}m_{3}^{s}\right\} $
acting on $\mathbb{T}$ leads to the following general definition:
A semigroup $\Sigma$ acting on a topological space $\Omega$ will be called
\textbf{ID} (stands for \textbf{I}nfinite is \textbf{D}ense) if the
only infinite closed $\Sigma$-invariant subset of $\Omega$ is $\Omega$
itself. In terms of subsystems, an action of an ID semigroup is almost minimal: the only (proper) subsystems are finite.

In \cite{berend1983multi}, Berend classified the ID semigroups
of endomorphisms of $\mathbb{T}^{n}$. Later on, in \cite{berend1984multi},
he gave a necessary and sufficient condition for a commutative semigroup
of endomorphisms of $G$ to be an ID semigroup, where $G$ is a finite
dimensional connected compact abelian group.

Let $\Omega$ be a full shift space, i.e. the space of all infinite one (or two) sided sequences over some finite symbol set and denote by $\sigma:\Omega\to\Omega$ the left shift. Endomorphisms of the dynamical system $\left(\Omega,\sigma\right)$
are continuous transformations $\tau:\Omega\to\Omega$
which satisfy $\tau\sigma=\sigma\tau$. These transformations are called Cellular Automata Transformations. In this article the term {}``transformation'' will always refer to a transformation of a cellular automaton. The object studied here is a semigroup which consists of such transformations.

Note that any transformation maps each periodic sequence to a periodic sequence (the period may decrease). Therefore, a semigroup of transformations cannot have minimal action. The ID property is a natural generalization of the minimality concept for symbolic dynamics.

The commutativity property is also of interest.
A semigroup is said to be maximal commutative (MC) if it is commutative and it contains all the transformations that commute with all its members.

In section \ref{sec:Arithmetic-semi-groups} we recall the definition of {}``multiplication
cellular automaton'', studied by Blanchard, Host and Maass (\cite{0881.58035} and \cite{blanchard1997dynamical}), which represents a multiplication by an integer on
the one dimensional torus. We confirm that Furstenberg's result can be applied to semigroups spanned by transformations of multiplication cellular automata when the number of symbols is not a power of a prime. Then, we show that the semigroup of all multiplication transformations is MC if and only if the number of symbols is not a power of a prime.

In section \ref{sec:Algebraic-Semigroups} another semigroup, consisting of transformations which are all linear, is considered. First, we prove using {}``symbolic tools'' that the semigroup which consists of all the transformations over a one sided shift space is ID, for a symbol set of any size.
When $s$ is a power of a prime, and we endow the symbol set with a finite field structure, a generalization of this proof will show that the the semigroup of linear transformations is also ID. When considering the symbol set of $s$ symbols as the ring $\frac{\mathbb{Z}}{s\mathbb{Z}}$, the semigroup of linear transformations is ID if and only if $s$ is a prime.
However, for any $s$, when considering the symbol set as a ring or as a field, the semigroup of linear transformations is always MC. Study of transformations which are linear when the symbol set is the ring $\frac{\mathbb{Z}}{s\mathbb{Z}}$ can be found in \cite{ito1983linear}. Transformations which are linear, when the symbol set is a field (in a more general settings), were considered in a series of papers of Ceccherini-Silberstein and Coornaert starting from \cite{ceccherini2006garden} and recently appeared in chapter 8 of their book \cite{ceccherini2010cellular}.

Given a transformation, the question which other transformations commute with it, is known as {}``The Commuting Block Maps Problem'' and was introduced by Coven, Hedlund and Rhodes in \cite{coven1979commuting}. They presented a solution for some classes of block maps over the space $\left\{ 0,1\right\} ^{\mathbb{Z}}$.
One of the classes is the class of homogeneous block maps, which
are called here {}``linear cellular automata transformations''.
They proved that when the symbol set is $\left\{ 0,1\right\} $, then the semigroup of all linear cellular automata transformations is MC. The methods used in \cite{coven1979commuting} can not be easily generalized since they use specific properties related to the ring $\frac{\mathbb{Z}}{2\mathbb{Z}}$.
The results in sections \ref{sec:Arithmetic-semi-groups}, \ref{sec:Algebraic-Semigroups} can be viewed as the solution of the commuting problem for the classes of multiplications transformations and the linear transformations.

Sections 1-4 treat semigroups acting on a one sided shift space. Two sided shift spaces and semigroups of transformations acting on them, will be introduced in section \ref{sec:Two-sided-shift}. Given a semigroup acting on a one sided shift space, we construct a semigroup acting on the related two sided shift space and vice versa.
We prove that the MC and the ID properties are preserved under these constructions.

Boyle, Lind and Rudolph in \cite{boyle1988automorphism}, proved that the automorphism group of a two-sided subshift of finite type is an ID semigroup, for any finite symbol set. In section \ref{sec:The-Action-of-Aut} we present a corollary of
the theorems proved in sections \ref{sec:Arithmetic-semi-groups}
and \ref{sec:Two-sided-shift}. This corollary yields a stronger version of Boyle-Lind-Rudolph Theorem (only for the full shift space), whenever the number of symbols is not a power of a prime.

Since both properties considered in this article are related to {}``largeness'', we investigate, in section \ref{sec:ID-and-Maximal}, the question how {}``small'' (in term of number of generators) can be a semigroup having these properties.

\section{\label{sec:Introduction}Preliminaries}

Define $\Lambda_{s}$ to be a finite set of symbols such that $\left|\Lambda_{s}\right|=s$ for $s\ge2$ ($s\in \mathbb{N}$).
Throughout this article, we will assume that the symbol set is $\Lambda_{s}=\left\{ 0,1,...,s-1\right\} $.
One sided (full) shift space is the topological dynamical system
$\left(\Lambda_{s}^{\mathbb{N}},\sigma\right)$, where $\sigma$ is
the left shift: $\sigma\left(a\right)_{k}=a_{k+1}$
and $\Lambda_{s}^{\mathbb{N}}$ is the metric space of all one sided
sequences over $\Lambda_{s}$, equipped with the metric: \[
d\left(a,b \right)=\frac{1}{k+1}\]
 where $a_{k}\ne b_{k}$ and $a_{i}=b_{i}$ for all $i<k$.
If one treats $\Lambda_{s}$ as a discrete topological space, this metric induces the Tychonoff topology on the product $\Lambda_{s}^{\mathbb{N}}$.
Hence, $\Lambda_{s}^{\mathbb{N}}$ is a metric compact (Hausdorff) topological space.

A Cellular Automaton Transformation is a continuous function $\tau:\Lambda_{s}^{\mathbb{N}}\to\Lambda_{s}^{\mathbb{N}}$ which satisfies $\tau\sigma=\sigma\tau$. As stated before, a {}``transformation'' in this article we will always mean a cellular automaton transformation.
Denote the set of all the transformations over $\Lambda_{s}^{\mathbb{N}}$
by $CAT\left(\Lambda_{s}^{\mathbb{N}}\right)$. Note that $CAT\left(\Lambda_{s}^{\mathbb{N}}\right)$
is closed under the composition operation and therefore $CAT\left(\Lambda_{s}^{\mathbb{N}}\right)$
has a structure of a non-commutative semigroup.

These transformations were characterized in \cite{hedlund1969endomorphisms} as follows:
\chl
Let $\tau:\Lambda_{s}^{\mathbb{N}}\to\Lambda_{s}^{\mathbb{N}}$, then
$\tau\in CAT\left(\Lambda_{s}^{\mathbb{N}}\right)$,
if and only if there exists some $r\in\mathbb{N}$ and a function $f_{\tau}:\Lambda_{s}^{r+1}\to\Lambda_{s}$, such that \[
\tau\left(a\right)_{k}=f_{\tau}\left(a_{k}a_{k+1}...a_{k+r}\right)\]
 for any $a \in\Lambda_{s}^{\mathbb{N}}$ and
$k\in\mathbb{N}$.
$f_{\tau}$ is called a block map and $r$ is the radius of $\tau$. It is often convenient to use the same notation for the block map $f_{\tau}$ and the transformation $\tau$.
\medbreak

The following two properties will be discussed here.

\begin{definition}
A semigroup $\Sigma\subset CAT\left(\Lambda_{s}^{\mathbb{N}}\right)$
has the \textbf{ID property} if every closed $\Sigma$-invariant proper
subset of $\Lambda_{s}^{\mathbb{N}}$ is finite. In this case we say
that $\Sigma$ is an \textbf{ID semigroup} (ID stands for \textbf{{}``I}nfinite
is \textbf{D}ense'').
\end{definition}

\begin{definition}
A semigroup $\Sigma\subset CAT\left(\Lambda_{s}^{\mathbb{N}}\right)$
is \textbf{maximal commutative (MC)} if:
\begin{enumerate}
\item $\Sigma$ is commutative
\item for each $\mu\in CAT\left(\Lambda_{s}^{\mathbb{N}}\right)$ such that
$\mu\tau=\tau\mu$ (for all $\tau\in\Sigma$), $\mu$ necessarily belongs to $\Sigma$.
\end{enumerate}
\end{definition}

Throughout this article we denote $a_{\left[i,j\right)}=a_{i}a_{i+1}...a_{j-1}$
and $a_{\left[i,j\right]}=a_{i}a_{i+1}...a_{j}$ for $a \in\Lambda_{s}^{\mathbb{N}}$
or $a \in\Lambda_{s}^{\mathbb{Z}}$.

Let $\tau_1,\tau_2,\dots ,\tau_n$ be transformations. We denote by $\left\langle \tau_1 ,\tau_2,\dots ,\tau_n \right\rangle $ the semigroup generated by $\tau_1,\tau_2,\dots, \tau_n$, under the composition operation.
If $\Sigma=\left\{ \mu_{\alpha} \right\}_{\alpha \in I}$ is a semigroup of transformations (closed under composition) and $\tau$ is a transformation, then $\left\langle \Sigma ,\tau \right\rangle$ is the semigroup generated by $\tau$ and $\mu_\alpha$ for all $\alpha \in I$.

\section{\label{sec:Arithmetic-semi-groups}Multiplication Cellular Automata Transformations}

Let $a\in\Lambda_{s}^{\mathbb{N}}$. We can think
of $a$ as an $s$-representation of a point
in $\mathbb{T}=\frac{\mathbb{R}}{\mathbb{Z}}$ using the evaluation
function:

\[
V:\Lambda_{s}^{\mathbb{N}}\to\mathbb{T},\,\,\,\, V\left(a \right)=\sum\limits _{n=0}^{\infty}\frac{a_{n}}{s^{n+1}}\]
This function is injective up to the countable set
$\left\{ \frac{d}{p_{1}^{e_{1}}p_{2}^{e_{2}}...p_{r}^{e_{r}}}\right\} \subset\mathbb{T}$
where $p_{i}$ are primes satisfying $p_{i}|s$, $e_{i}\in\mathbb{N}$
 and $d\le {p_{1}^{e_{1}}p_{2}^{e_{2}}...p_{r}^{e_{r}}}$ ($d\in\mathbb{N}$).
Each point of this set has
a double representation:
\[
a_{0}a_{1}...a_{k}b000...\mbox{ and }a_{0}a_{1}...a_{k}\left(b-1\right)_{mod\, s}\left(s-1\right)\left(s-1\right)\left(s-1\right)...\]
We call these an {}``upper representation'' and a {}``lower representation'',
respectively.

In order to investigate the relationship between cellular automata transformations
and torus functions, we need the following definitions:
\begin{definition}
Let $\tau\in CAT\left(\Lambda_{s}^{\mathbb{N}}\right)$ and let $f:\mathbb{T}\to\mathbb{T}$
be a torus function, \textbf{$\tau$ represents $f$ }if
the following diagram:

\[
\xymatrix{\Lambda_{s}^{\mathbb{N}}\ar[r]^{\tau}\ar[d]^{V} & \Lambda_{s}^{\mathbb{N}}\ar[d]^{V}\\
\mathbb{T}\ar[r]^{f} & \mathbb{T}}
\]

is commutative.
\end{definition}

For a prime $p$ satisfying $p|s$, let $\mu_{p}\in CAT\left(\Lambda_{s}^{\mathbb{N}}\right)$ be the cellular
automaton transformation defined by:

\[
\mu_{p}\left(a \right)_{k}=\left(p\cdot a_{k}+\left\lfloor \frac{p\cdot a_{k+1}}{s}\right\rfloor \right)_{mod\, s}\]

In addition, define the identity map $\mu_1\left(a\right)_k=a_k$, the mirror map $\mu_{-1}\left(a\right)_k=s-a_k$, and the zero map $\mu_0\left(a\right)_k=0$. For any $u\in \mathbb{Z}$ such that $u={-1}^{e_0}{p_1}^{e_1}\cdots {p_d}^{e_d}$ where $p_i|s$ for all $i=1,\dots,d$ ($e_0\in \left\{0,1 \right\}$), define $\mu_{u}={\mu_{-1}}^{e_0} \circ {\mu_{p_1}}^{e_1} \circ \dots \circ{\mu_{p_d}}^{e_d}$ .

When there is ambiguity regarding the space where the transformation act, we denote $\mu_{u}^{(s)}$ to emphasize that $\mu_{u}^{(s)}\in CAT\left(\Lambda_{s}^{\mathbb{N}}\right)$.

\begin{theo}
\label{thm:multi-is-ID}
Let $s=p_{1}^{e_{1}}\cdot\dots\cdot p_{d}^{e_{d}}$ be the
prime decomposition of $s$, then
$\left\langle \mu_{p_{i}},\sigma \right\rangle \subset CAT\left(\Lambda_{s}^{\mathbb{N}}\right)$
(for any $i$) is ID if and only if $s$ is not a power of prime.
\end{theo}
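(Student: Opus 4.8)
The plan is to push the whole question down to the torus $\mathbb{T}$ through the evaluation map $V=V_s\colon\Lambda_s^{\mathbb{N}}\to\mathbb{T}$ and let Furstenberg's theorem (in its general, non-lacunary form) do the work there. Fix $i$ and write $p=p_i$. Two facts are needed at the outset: $\mu_p$ represents $m_p$ (the defining property of the Blanchard--Host--Maass multiplication automaton, recalled in this section), and $\sigma$ represents $m_s$ (immediate, since $V(\sigma a)=s\,V(a)\bmod 1$). Thus $V$ is a semiconjugacy from $\big(\Lambda_s^{\mathbb{N}},\langle\mu_p,\sigma\rangle\big)$ onto $\big(\mathbb{T},\langle m_p,m_s\rangle\big)$, and recall that $V$ is at most two-to-one.

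Suppose first that $s$ is not a power of a prime. Since $p\mid s$, this forces $s$ to not be a power of $p$: if $p=k^{a}$ and $s=k^{b}$ for an integer $k\ge 2$ then $k\mid p$, so $k=p$ and $s=p^{b}$, a contradiction. Hence $\langle m_p,m_s\rangle=\{m_{p^{r}s^{t}}:r,t\ge0\}$ is non-lacunary, and by Furstenberg's theorem its only infinite closed invariant subset of $\mathbb{T}$ is $\mathbb{T}$ itself. Now take any closed infinite $\langle\mu_p,\sigma\rangle$-invariant set $X\subseteq\Lambda_s^{\mathbb{N}}$: then $V(X)$ is closed (compact image), infinite ($V$ at most two-to-one), and invariant ($m_p(V(X))=V(\mu_p(X))\subseteq V(X)$, and likewise for $m_s$), so $V(X)=\mathbb{T}$. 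Finally, every $a\in\Lambda_s^{\mathbb{N}}$ that is neither eventually $0$ nor eventually $s-1$ satisfies $V^{-1}(V(a))=\{a\}$, so $V(a)\in V(X)$ gives $a\in X$; these $a$ are dense in $\Lambda_s^{\mathbb{N}}$ (every cylinder contains one, e.g. by appending $0,1,0,1,\dots$), and since $X$ is closed it must equal $\Lambda_s^{\mathbb{N}}$. Thus $\langle\mu_p,\sigma\rangle$ is ID.

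Suppose now that $s=p^{e}$. Then $\langle m_p,m_s\rangle=\langle m_p\rangle=\{m_p^{\,n}:n\ge1\}$ is cyclic, hence lacunary, and Furstenberg's theorem gives no information; instead I would exhibit an explicit counterexample and pull it back along $V$. Let $G\subseteq\Lambda_p^{\mathbb{N}}$ be the subshift of sequences over $\{0,\dots,p-1\}$ avoiding the block $11$ (the golden mean subshift); it is closed, $\sigma_p$-invariant, infinite, and proper. Put $C=V_p(G)\subseteq\mathbb{T}$, where $V_p$ is base-$p$ evaluation. Then $C$ is closed (continuous image of the compact set $G$), $m_p$-invariant (since $m_p\circ V_p=V_p\circ\sigma_p$ and $\sigma_p(G)\subseteq G$), infinite ($V_p$ at most two-to-one), and proper: the point whose base-$p$ expansion is the periodic word $\overline{110}$ is not eventually constant, hence has a unique base-$p$ expansion, which contains $11$, so it lies outside $C$. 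Consequently $X=V^{-1}(C)\subseteq\Lambda_s^{\mathbb{N}}$ is closed, infinite ($V$ onto and $C$ infinite), $\mu_p$- and $\sigma$-invariant (as $m_p(C)\subseteq C$ and $m_p^{\,e}(C)\subseteq C$), and proper (as $C\ne\mathbb{T}$). So $\langle\mu_p,\sigma\rangle$ admits a closed infinite proper invariant set and is not ID. (Alternatively, regrouping base-$p$ digits in blocks of $e$ identifies $\Lambda_s^{\mathbb{N}}$ with $\Lambda_p^{\mathbb{N}}$ and, as one checks from the Curtis--Hedlund--Lyndon block map, carries $\mu_p$ to the shift $\sigma_p$ and $\sigma$ to $\sigma_p^{\,e}$, so $\langle\mu_p,\sigma\rangle$ is conjugate to the cyclic semigroup $\langle\sigma_p\rangle$, for which $G$ itself is the counterexample.)

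The only subtle point is that $V$ is a finite-to-one semiconjugacy rather than a genuine conjugacy. This is precisely why the forward direction needs the density of the unique-representation sequences to upgrade $V(X)=\mathbb{T}$ into $X=\Lambda_s^{\mathbb{N}}$, and why in the backward direction one must choose the witness $\overline{110}$ with a genuinely unique base-$p$ expansion, so as to be sure that $V^{-1}(C)$ is proper and not all of $\Lambda_s^{\mathbb{N}}$. The torus dynamics itself is handled entirely by Furstenberg's theorem; the rest is bookkeeping with the evaluation map.
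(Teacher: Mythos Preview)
Your proof is correct and follows essentially the same strategy as the paper: push everything to $\mathbb{T}$ via the evaluation map $V$, invoke Furstenberg's non-lacunary theorem for the forward direction, and exhibit an explicit counterexample for the prime-power case. Your handling of the forward direction is in fact slightly more careful than the paper's: the paper asserts that $V(A)$ is proper when $A$ is proper without comment, whereas you explicitly use the density of unique-representation sequences to pass from $V(X)=\mathbb{T}$ back to $X=\Lambda_s^{\mathbb{N}}$ (which is exactly what is needed to justify the paper's assertion as well).

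The only genuine difference is the choice of counterexample in the prime-power case. The paper uses Furstenberg's orbit of $\sum p^{-i^2}$ (and also gives a direct symbolic argument: sequences over $\{0,1\}\subset\Lambda_{p^m}$ have $\mu_p$-orbits missing the digit $p+1$). You instead pull back the golden-mean subshift from $\Lambda_p^{\mathbb{N}}$ through $V_p$ and then $V_s^{-1}$. Both work; yours has the mild advantage of producing a fully shift-invariant set in one stroke, while the paper's symbolic example is perhaps more transparent since it lives directly in $\Lambda_s^{\mathbb{N}}$ without passing through $\mathbb{T}$. Your parenthetical alternative (regrouping base-$p$ digits in blocks of $e$ to conjugate $\langle\mu_p,\sigma\rangle$ to $\langle\sigma_p\rangle$) is exactly the conjugacy $\phi$ the paper uses later in the proof of Theorem~\ref{thm:multi-is-MC}.
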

\begin{theo}
\label{thm:multi-is-MC}
Let $s=p_{1}^{e_{1}}\cdot\dots\cdot p_{d}^{e_{d}}$ be the prime decomposition of $s$, then $\left\langle \mu_{p_{1}},\dots,\right.$ $\left.\mu_{p_{d}},\mu_{0},\mu_{1}\right\rangle \subset CAT\left(\Lambda_{s}^{\mathbb{N}}\right)$
is maximal commutative if and only if $s$ is not a power of prime.
\end{theo}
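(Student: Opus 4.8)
The plan is to treat the two directions separately. For the easy direction, suppose $s = p^e$ is a power of a prime. Then all the generators $\mu_{p_i}$ reduce to powers of $\mu_p$, so $\Sigma = \langle \mu_{p}, \mu_0, \mu_1\rangle$. I would exhibit a transformation outside $\Sigma$ that commutes with every element of $\Sigma$ — a natural candidate is the shift $\sigma$ itself (which commutes with every CAT by definition, and in particular with $\mu_p, \mu_0, \mu_1$), provided I can check $\sigma \notin \Sigma$. Every element of $\Sigma$ other than powers of $\mu_p$ is either the zero map or built from $\mu_0,\mu_1$; the nonzero non-identity elements are $\mu_{p^k} = \mu_p^k$ for $k\ge 1$, and none of these equals $\sigma$ because, e.g., $\mu_p$ maps some constant sequences to constant sequences in a way $\sigma$ does while also collapsing information, or more simply by comparing the induced torus maps ($\mu_p$ represents multiplication by $p$, while $\sigma$ represents multiplication by $s = p^e \ne p^k$ unless $k=e$, but $\mu_p^e = \mu_{s}$ is multiplication by $s$, i.e. $V\circ\mu_p^e = V$, the identity on $\mathbb{T}$, whereas $\sigma$ induces multiplication by $s$ which is nontrivial on $\mathbb{T}$ — wait, one must be careful, since $V\circ\sigma$ corresponds to $s x \bmod 1$, which is the same as $V \circ \mu_p^e$; so I instead distinguish $\sigma$ from $\mu_p^e$ directly by noting $\mu_p^e$ is not surjective on $\Lambda_s^{\mathbb N}$ while $\sigma$ is, or by a short block-map computation). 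So $\Sigma$ is not MC when $s$ is a prime power, since $\langle \Sigma, \sigma\rangle$ is a strictly larger commutative semigroup.

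For the hard direction, assume $s$ is not a power of a prime, so $d \ge 2$. First I must check $\Sigma$ is commutative: the $\mu_u$ all represent multiplication maps $m_u$ on $\mathbb{T}$, which commute with each other, and since $V$ is injective off a countable set and the transformations are continuous, commutativity on the torus forces $\mu_u \mu_v = \mu_v \mu_u$ on a dense set, hence everywhere. The substantive part is maximal commutativity: given $\mu \in CAT(\Lambda_s^{\mathbb N})$ commuting with all of $\Sigma$, I must show $\mu = \mu_u$ for some admissible $u$. The strategy is to use the commutation relation $\mu \mu_{p_i} = \mu_{p_i}\mu$ for each $i$, pass to the torus (here is where $d\ge 2$ and Theorem \ref{thm:multi-is-ID}, i.e. Furstenberg's theorem applied to $\langle \mu_{p_1},\dots,\mu_{p_d}\rangle$, becomes available), and show that $\mu$ must represent a torus map $f$ that commutes with multiplication by each $p_i$ — hence by the whole non-lacunary multiplicative semigroup. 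A commuting continuous self-map of $\mathbb{T}$ with such a large semigroup of multiplications must itself be affine, and in fact linear of the form $x \mapsto ux \bmod 1$ for some integer $u$ (this is a known rigidity fact, provable via the ID/minimality machinery: the graph of $f$ is a closed invariant set for the product action, its infinite components force it to be the graph of $m_u$). Then I would argue that $u$ must be of the special multiplicative form (only primes dividing $s$, times $\pm1$) for $f$ to be representable by a genuine block map, and that the block map representing $m_u$ is unique, hence equals $\mu_u$.

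The main obstacle I anticipate is the passage between the combinatorial CAT world and the torus: a block map $\mu$ need not descend cleanly to a continuous torus map because of the double-representation (upper/lower) ambiguity on the countable exceptional set, so I will need a lemma saying that a transformation commuting with $\mu_p$ (which "sees" the carrying structure) must respect carrying well enough to induce a well-defined continuous $f:\mathbb{T}\to\mathbb{T}$; this is presumably established in the section's earlier lemmas via the Blanchard–Host–Maass analysis of multiplication automata. A secondary obstacle is pinning down that $f$ is $x\mapsto ux$ rather than merely affine or piecewise-affine — this is exactly where the ID property of the multiplication semigroup (the only infinite closed invariant subset of $\mathbb{T}$, or of $\mathbb{T}\times\mathbb{T}$, is everything) does the work, by applying it to an appropriate invariant closed set built from $f$. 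Once $f = m_u$ is known, identifying $u$ and showing $\mu = \mu_u$ is a routine matching of block maps, using that $V$ is injective on a dense set and both sides are continuous.
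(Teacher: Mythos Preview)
Your hard direction follows the paper's line closely. You correctly isolate the key step: a transformation commuting with some $\mu_{p_i}$ descends to a well-defined continuous torus map (this is Lemma~\ref{lem:If-f-commutes-f-can-be-lifted}, proved by applying Furstenberg's theorem to the set $\Delta$ of representation discrepancies). From there the paper invokes the Blanchard--Host--Maass theorem directly: any CAT representing a continuous torus map represents either $m_u$ with $u\mid s^n$, or a constant $t/(s-1)$. Your graph-rigidity alternative (closed invariant subsets of $\mathbb{T}\times\mathbb{T}$) is plausible but would need Berend-type results rather than Furstenberg on $\mathbb{T}$, and in either approach you must not drop the constant-map branch --- that is exactly why $\mu_0$ appears among the generators.

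Your easy direction has a genuine gap. When $s=p^e$ you propose $\sigma$ as a commuting transformation outside $\Sigma=\langle\mu_p,\mu_0,\mu_1\rangle$, but in fact $\sigma=\mu_p^{\,e}\in\Sigma$. Both $\sigma$ and $\mu_p^{\,e}$ represent $m_{p^e}=m_s$ on $\mathbb{T}$ (your aside ``$V\circ\mu_p^e=V$, the identity'' is a slip: $V\circ\mu_p^e=m_s\circ V$, exactly as for $\sigma$), both preserve upper and lower representations, and they agree on the dense set of irrational expansions, hence everywhere by continuity. The proposed surjectivity distinction therefore fails as well. The paper's argument is different: it builds a topological conjugacy $\phi:\Lambda_p^{\mathbb N}\to\Lambda_{p^e}^{\mathbb N}$ carrying $\langle\sigma,\mu_0\rangle\subset CAT(\Lambda_p^{\mathbb N})$ to $\langle\mu_p,\mu_0\rangle\subset CAT(\Lambda_{p^e}^{\mathbb N})$, reducing to the observation that $\langle\sigma,\mu_0\rangle$ is not MC in $CAT(\Lambda_p^{\mathbb N})$ --- which is immediate, since any block map sending the zero block to $0$ commutes with both $\sigma$ and $\mu_0$.
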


Denote $m_{u}:\mathbb{T}\to\mathbb{T}$ by $m_{u}\left(x\right)=\left(u\cdot x\right)_{mod\,1}$, and observe the following relationship:
\begin{lemma}
\label{lem:mu2-represents-the} $\mu_{p}\in CAT\left(\Lambda_{s}^{\mathbb{N}}\right)$ represents $m_{p}$ (multiplication by $p$ on $\mathbb{T}$).
\end{lemma}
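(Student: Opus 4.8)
The plan is to verify commutativity of the defining diagram pointwise: I will show directly from the series defining $V$ that $V(\mu_p(a)) = m_p(V(a)) = (p\cdot V(a)) \bmod 1$ for \emph{every} $a\in\Lambda_s^{\mathbb{N}}$. For each coordinate, introduce the Euclidean division $p\,a_n = s\,q_n + r_n$, where $q_n = \lfloor p\,a_n/s\rfloor$ and $0\le r_n < s$; then unwinding the definition gives $\mu_p(a)_n = (p\,a_n + q_{n+1})\bmod s = (r_n + q_{n+1})\bmod s$.

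The key observation --- and the only point where anything needs to be checked carefully --- is that \emph{no carry occurs}. Since $p\mid s$ and $p\mid p\,a_n$, the remainder $r_n = p\,a_n - s\,q_n$ is divisible by $p$; being a multiple of $p$ with $0\le r_n<s$, it satisfies $r_n\le s-p$. On the other hand $p\,a_{n+1}\le p(s-1)<ps$ forces $q_{n+1}\le p-1$. Hence $r_n + q_{n+1}\le (s-p)+(p-1) = s-1 < s$, so in fact $\mu_p(a)_n = r_n + q_{n+1}$ with no reduction mod $s$ needed. This is where I expect the substance of the argument to be: without the divisibility $p\mid s$ one genuinely gets carries propagating along the (one-sided, hence "endless") sequence and the identity fails, which is consistent with the fact that $\mu_p$ is only defined for primes $p\mid s$.

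Granting the no-carry fact, the rest is a rearrangement of absolutely convergent series of nonnegative terms: $p\cdot V(a) = \sum_{n\ge 0}\frac{s\,q_n + r_n}{s^{n+1}} = \sum_{n\ge 0}\frac{q_n}{s^{n}} + \sum_{n\ge 0}\frac{r_n}{s^{n+1}} = q_0 + \sum_{n\ge 0}\frac{q_{n+1} + r_n}{s^{n+1}} = q_0 + \sum_{n\ge 0}\frac{\mu_p(a)_n}{s^{n+1}} = q_0 + V(\mu_p(a))$. Since $q_0\in\mathbb{Z}$, this means $p\cdot V(a)$ and $V(\mu_p(a))$ coincide in $\mathbb{T}=\mathbb{R}/\mathbb{Z}$, i.e. $V(\mu_p(a)) = m_p(V(a))$, which is exactly commutativity of the diagram. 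Note that the computation used no hypothesis on $a$ whatsoever; in particular it applies verbatim to the countably many points admitting a double $s$-representation, so nothing additional needs to be said about them.
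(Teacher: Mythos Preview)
Your proof is correct, and it takes a genuinely different route from the paper's. The paper argues by first checking the identity $V\circ\mu_p = m_p\circ V$ only on the dense set of ``finite'' sequences (those ending in $000\dots$), where it reduces to the familiar base-$s$ multiplication algorithm for terminating expansions, and then invokes continuity of $V$, $\mu_p$, and $m_p$ to extend to all of $\Lambda_s^{\mathbb{N}}$. You instead verify the identity for \emph{every} sequence directly, by an explicit series computation.

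The substantive step in your approach is the ``no carry'' observation: writing $p\,a_n = s\,q_n + r_n$, the divisibility $p\mid s$ forces $p\mid r_n$, whence $r_n\le s-p$, while $q_{n+1}\le p-1$; together these give $r_n+q_{n+1}\le s-1$, so $\mu_p(a)_n = r_n+q_{n+1}$ exactly. This is precisely what makes the series rearrangement go through without any propagating carries, and it pinpoints where the hypothesis $p\mid s$ enters. The paper's argument hides this inside the phrase ``multiplication algorithm,'' so your version is more self-contained and arguably more informative about why the lemma holds. The paper's version, on the other hand, is shorter and illustrates a useful general technique (check on a dense set, extend by continuity) that recurs elsewhere.
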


\begin{proof}
For a {}``finite'' sequence, i.e. a sequence for which there exists
some $k\in\mathbb{N}$ such that $\sigma^{k}\left(a\right)=000...$,
the commutativity of the diagram follows from the multiplication algorithm
of numbers represented by $s$-expansion. Clearly, the set of all
such {}``finite'' sequences is dense in $\Lambda_{s}^{\mathbb{N}}$.
The proof follows from the continuity of $V,m_{p},$ and $\mu_{p}$.
\end{proof}

This means in particular that when $s=10$, the multiplication by $2$ and by $5$ on the torus are representable. Note however that $m_3$ cannot be represented by a cellular automaton transformation in $CAT \left( \Lambda_{10}^\mathbb{N}\right)$ (see BHM Theorem in the sequel).

\subsection*{\label{sub:mu2-mu_5-is-non}Proof of Theorem \ref{thm:multi-is-ID}}
Consider first the case where $s$ is not a power of prime, and let $p$ be a prime factor of $s$. The proof simply follows from Furstenberg's Theorem stated in \cite{furstenberg1967disjointness}.
Furstenberg showed that a non-lacunary%
\footnote{A multiplicative semigroup $\Gamma\subset\mathbb{Z}$ is lacunary
if there exists $\gamma\in\mathbb{N}$ such that every member of $\Gamma^{+}=\Gamma\cap\mathbb{N}$
is a power of $\gamma$. Otherwise, $\Gamma$ is non-lacunary.%
} semigroup of integers acting on $\mathbb{T}$ by multiplication has
the property that the only closed infinite subset of $\mathbb{T}$
which is invariant (with respect to this semigroup action) is $\mathbb{T}$.
Note that the semigroup $\left\{ p^{k_1}s^{k_2}\right\}_{k_1,k_2\in\mathbb{N}}ê $ is non-lacunary
and therefore has this property.

Let $A\subset\Lambda_{s}^{\mathbb{N}}$ be a closed $\left\langle \mu_{p},\sigma\right\rangle $-invariant
proper subset of $\Lambda_{s}^{\mathbb{N}}$. We wish to show that
$\left|A\right|<\infty$. $\Lambda_{s}^{\mathbb{N}}$ is a compact
space, hence both $A$ and $V\left(A\right)$ are compact. In particular,
$V\left(A\right)\subset\mathbb{T}$ is a closed set. $\mu_{p}\left(A\right)\subset A$, so $V\left(\mu_{p}\left(A\right)\right)\subset V\left(A\right)$.
From $V\left(\mu_{p}\left(A\right)\right)=p\cdot V\left(A\right)$
it follows that $p\cdot V\left(A\right)\subset V\left(A\right)$,
hence $V\left(A\right)$ is $m_{p}$-invariant. The same holds for
$\sigma$ and $m_{s}$. $V\left(A\right)$ is a proper
subset of $\mathbb{T}$, thus $V\left(A\right)$ is closed and a $\left\{ p^{k_1}s^{k_2}\right\} $-invariant
proper subset of $\mathbb{T}$, hence by Furstenberg's Theorem it is
finite. Although $V$ is not injective, for every $x\in\mathbb{T}$,
$\left|V^{-1}\left(x\right)\right|\le2$, therefore $A$ is finite.

In case $s=p^{m}$ ($p$ prime), $\left\langle \mu_{p},\sigma\right\rangle =\left\langle \mu_{p}\right\rangle \subset CAT\left(\Lambda_{s}^{\mathbb{N}}\right)$
is not ID, since $\left\{ p^{k_1}\right\}_{k_1\in\mathbb{N}} $ is lacunary and we can
use the same example of Furstenberg given in \cite{furstenberg1967disjointness} as follows. Let
$x=\sum _{i=1}^{\infty}p^{-i^{2}}$, then $\left\{ p^{k_1}x\right\}_{k_1\in\mathbb{N}}$
has only the limit points $0,p^{-1},p^{-2},...$ .
The set $\left\{ \mu_{p}^{k_1}\left(V^{-1}x\right)\right\}_{k_1\in\mathbb{N}} \subset \mathbb{T} $ is infinite and not dense, therefore $\left\langle \mu_{p}\right\rangle $ is not ID.

Alternatively, we give here a symbolic construction:
If $s=p^{2}$, then observe that by the definition of $\mu_{p}$,

\[
\begin{array}{ccccc}
\mu_{p} &  &  &  & \mu_{p}\\
00\mapsto0 &  &  &  & 00\mapsto0\\
01\mapsto0 &  &  &  & 0p\mapsto1\\
10\mapsto p &  &  &  & p0\mapsto0\\
11\mapsto p &  &  &  & pp\mapsto1\end{array}\]
Any irrational number $x\in\mathbb{T}$, whose expansion $a \in\Lambda_{p^{2}}^{\mathbb{N}}$
includes only the digits $0,1$, has an infinite $\mu_p$-orbit (since $a$ represents an irrational number). This means that $\left\{\mu_p^k\left(a\right)\right\}_{k\in\mathbb{N}}$ is infinite but none of the sequences in it contain digits other than $\left\{ 0,1,p\right\}$ (Actually, for even $k$'s the sequences will contain only $0$ and $1$ and for odd $k$'s the only digits that appear are $0$ and $p$). In particular,
the digit $p+1$ does not appear in $\left\{\mu_p^k\left(a\right)\right\}_{k\in\mathbb{N}}$.
Therefore, $\overline{\left\{\mu_p^k\left(a\right)\right\}_{k\in\mathbb{N}}}$ is an infinite, $\mu_{p}$-invariant proper subset of $\Lambda_{p^{2}}^{\mathbb{N}}$, hence $\left\langle \mu_{p}\right\rangle$ is not ID.

For $s=p^{n}$ observe that $\mu_{p}\left(0,0\right),\mu_{p}\left(0,p^{k}\right),\mu_{p}\left(p^{k},0\right),\mu_{p}\left(p^{k},p^{k}\right)\in\left\{ 0,p^{k+1}\right\} $
for $k\in\left\{ 0,1,\dots,k-2\right\} $ and $\mu_{p}\left(0,0\right),\mu_{p}\left(0,p^{n-1}\right),\mu_{p}\left(p^{n-1},0\right),\mu_{p}\left(p^{n-1},p^{n-1}\right)\in\left\{ 0,1\right\} $.
The digit $p+1$ cannot appear in the orbit $\left\{\mu_p^k\left(a \right)\right\}_{k\in\mathbb{N}}$ if $a_{n}\in\left\{0,1\right\}$ for all $n$. \hfill $\Box$

\begin{remark}
The last proof shows a way to construct closed, $m_{u}$-invariant subsets of $\mathbb{T}$ (which asserts that $m_{u}$ is not ID).
Take a non eventually periodic%
\footnote{Definitions and details provided in the next section.%
}
sequence $a \in\Lambda_{u^{2}}^{\mathbb{N}}$
composed only of $\left\{ 0,1\right\} $, consider $\mu_{u}\in CAT\left(\Lambda_{u^{2}}^{\mathbb{N}}\right)$
and take $V^{-1}\left( \overline{\left\{\mu_{p}^{k}\left(a\right) \right\}_{k\in \mathbb{N}} }\right)\subset\mathbb{T}$.
\end{remark}

Let $\tau:\Lambda_{s}^{\mathbb{N}}\to\Lambda_{s}^{\mathbb{N}}$
be a cellular automaton transformation. We would like to determine under which conditions $\tau$ represents a continuous torus function. In other words, when can $\tilde{\tau}:\mathbb{T}\to\mathbb{T}$ be constructed using the diagram:

\[
\xymatrix{\Lambda_{s}^{\mathbb{N}}\ar[r]^{\tau}\ar[d]^{V} & \Lambda_{s}^{\mathbb{N}}\ar[d]^{V}\\
\mathbb{T}\ar@{..>}[r]^{\tilde{\tau}} & \mathbb{T}}
\]
where $V$ is the evaluation function.

For $\tilde{\tau}$ to be well defined, we must confirm that if $a,b \in V^{-1}\left(x\right)$,
then $V\tau\left(a\right)=V\tau\left(b\right)$.
\begin{lemma}
\label{lem:If-f-commutes-f-can-be-lifted} Let $p|s$ and $s\ne p^m$. If $\tau\in CAT\left(\Lambda_{s}^{\mathbb{N}}\right)$
commutes with $\mu_{p}$, then $\tilde{\tau}$ is well defined continuous torus function.\end{lemma}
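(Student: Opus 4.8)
The plan is to reduce the well-definedness of $\tilde\tau$ to a statement about the two representations of the countable ``ambiguous'' set, and to exploit the hypothesis $\tau\mu_p=\mu_p\tau$ to force the two outputs to have the same value under $V$. Recall that the only sequences $a,b$ with $V(a)=V(b)$ and $a\ne b$ are an upper representation $a=a_0\cdots a_k\,b\,000\cdots$ and its lower representation $a'=a_0\cdots a_k\,(b-1)_{\bmod s}\,(s-1)(s-1)\cdots$, and these exist precisely when $x=V(a)$ lies in the dense countable set $\{d/(p_1^{e_1}\cdots p_r^{e_r})\}$ for primes $p_i\mid s$. So to show $\tilde\tau$ is a well-defined function it suffices to check that for every such pair, $V\tau(a)=V\tau(a')$; continuity of $\tilde\tau$ will then follow automatically from continuity of $V$ and $\tau$ together with density of $V(V^{-1}(\text{ambiguous set}))$, exactly as in the proof of Lemma \ref{lem:mu2-represents-the}.

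First I would observe that if $\tau$ has radius $r$, then $\tau(a)$ and $\tau(a')$ can differ only in the finitely many coordinates $k,k+1,\dots$ that ``see'' the place where $a$ and $a'$ diverge; more precisely they agree on all coordinates $\le k-r-1$ (say) and are each eventually constant — $\tau(a)$ is eventually $f_\tau(0\cdots0)$ and $\tau(a')$ is eventually $f_\tau((s-1)\cdots(s-1))$. Since $\tau$ is a transformation it sends $000\cdots$ to $c c c\cdots$ and $(s-1)(s-1)\cdots$ to $c' c' c'\cdots$ for constants $c,c'$; for $\tilde\tau$ to be well defined we in particular need $\tau$ to send ambiguous sequences to ambiguous sequences, i.e. $\tau(a),\tau(a')$ must again be an upper/lower pair with the same $V$-value. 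The key point is that $V\tau(a)=V\tau(a')$ is equivalent to the pair $(\tau(a),\tau(a'))$ being of this upper/lower form, which in turn is a purely combinatorial condition on the finitely many coordinates near $k$.

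The main work — and the step I expect to be the obstacle — is to use commutation with $\mu_p$ to pin down these finitely many coordinates. The idea is that $\mu_p$ implements multiplication by $p$ on $\mathbb T$, which does respect the identification $V(a)=V(a')$; applying $\mu_p$ repeatedly to a sequence $a$ that represents a point of the form $d/(p_1^{e_1}\cdots p_r^{e_r})$ eventually produces, for a suitable power, a sequence whose divergence with its partner happens ``far out,'' or one can arrange the carry structure so that $\mu_p^n(a)$ and $\mu_p^n(a')$ become genuinely comparable. Concretely I would argue: since $V(a)=V(a')=x$, we have $V(\tau(a))$ and $V(\tau(a'))$ are the two candidate values, and it suffices to show they are equal; apply $\mu_p^n$ to both and use $\mu_p^n\tau=\tau\mu_p^n$ to get $\mu_p^n\tau(a)=\tau\mu_p^n(a)$ and $\mu_p^n\tau(a')=\tau\mu_p^n(a')$. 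For $n$ large, $\mu_p^n(a)=\mu_p^n(a')$ (because iterating multiplication by $p$ eventually collapses the two $s$-adic representations of $x$, as $p\mid s$ pushes the ambiguous digit to $0$), so $\mu_p^n\tau(a)=\mu_p^n\tau(a')$; thus $\tau(a)$ and $\tau(a')$ are two sequences whose difference is killed by some power of $\mu_p$. The final step is to check that two sequences with $\mu_p^n(u)=\mu_p^n(u')$ necessarily satisfy $V(u)=V(u')$ — this is where the hypothesis $s\ne p^m$ matters: when $s$ has a prime factor $q\ne p$, multiplication by $p$ is injective ``modulo the kernel,'' and the only way $\mu_p^n$ can identify $u,u'$ is if they already represent the same torus point. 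I would isolate this as the crux: $\ker(m_p^n)$ on $\mathbb T$ together with the structure of $V$ forces $u=u'$ or $(u,u')$ an upper/lower pair, and either way $V\tau(a)=V\tau(a')$, so $\tilde\tau$ is well defined; continuity is then free.
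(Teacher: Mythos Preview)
Your approach has a genuine gap at the key step. You claim that for $n$ large, $\mu_p^n(a)=\mu_p^n(a')$, but this is false: $\mu_p$ \emph{preserves} the upper/lower dichotomy. If $a$ is the upper representation of $x$ (tail $000\cdots$) then $\mu_p(a)$ is the upper representation of $px$, and likewise $\mu_p(a')$ is the lower representation of $px$; the tail $(s-1)(s-1)\cdots$ is fixed by the block map of $\mu_p$, since $(p(s-1)+\lfloor p(s-1)/s\rfloor)\bmod s=(ps-p+p-1)\bmod s=s-1$. Since the ambiguous set is stable under $m_p$, the point $p^nx$ always has two representations, and $\mu_p^n(a)\ne\mu_p^n(a')$ for every $n$. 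The collapsing mechanism you propose never fires.

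Your fallback step is also problematic: even granting $\mu_p^n(u)=\mu_p^n(u')$, this only gives $p^nV(u)=p^nV(u')$ in $\mathbb{T}$, i.e.\ $V(u)-V(u')$ is $p^n$-torsion. The hypothesis $s\ne p^m$ does not make $m_p$ injective on $\mathbb{T}$ --- its kernel is $\{0,\tfrac1p,\dots,\tfrac{p-1}p\}$ regardless of $s$ --- so you cannot conclude $V(u)=V(u')$ this way. The paper's route is quite different: it collects all the discrepancies $\delta=|V\tau(x^u)-V\tau(x^l)|$ into a set $\Delta\subset\mathbb{T}$, shows $\Delta$ is closed and invariant under the non-lacunary semigroup $\{p^{k_1}s^{k_2}\}$ (this is exactly where commutation with $\mu_p$ is used), and applies Furstenberg's theorem to force $\Delta$ finite or all of $\mathbb{T}$; a short argument then rules out both nontrivial options, leaving $\Delta=\{0\}$. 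It is precisely because $s\ne p^m$ that $\{p^{k_1}s^{k_2}\}$ is non-lacunary, and that is how the hypothesis actually enters.
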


\begin{proof}
For concreteness, we will focus on the case $s=10, p=2$. The general case easily follows.
Let $\tau$ be a transformation that commutes with $\mu_{2}$.
Denote $A=\left\{ \frac{d}{2^{a}5^{b}}\right\} \subset\mathbb{T}$,
the set of all the points of the torus with two decimal expansions.
For each $x\in A$, define $x^{u}, x^{l} \in\Lambda_{10}^{\mathbb{N}}$
to be the upper and lower decimal representations of $x$, respectively.
In other words, $x^{u}\ne x^{l}$ but $V\left(x^{u}\right)=V\left(x^{l}\right)$.
We need to show that $V\tau\left(x^{u}\right)=V\tau\left(x^{l}\right)$
for all $x\in A$. For $x\in\mathbb{T}\backslash A$, $x^u=x^l$ is the unique decimal expansion of $x$.

Define $\Delta=\left\{ \delta\in\mathbb{T}|\,\exists x\in A,\,\left|V\tau\left(x^{u}\right)-V\tau\left(x^{l}\right)\right|=\delta\right\} $.
Note that $0 \in \Delta$ since one can check for example, that $V\tau\left(000...\right)=V\tau\left(999...\right)$.
For each $\delta \in \Delta$, $x\in\mathbb{T}$ will be called \textit{a witness
of $\delta$} if $\left|V\tau\left(x^{u}\right)-V\tau\left(x^{l}\right)\right|=\delta$.
It will be sufficient to prove that $\Delta=\left\{ 0\right\} $. We claim that $\Delta\subset\mathbb{T}$ satisfies the conditions of Furstenberg's Theorem.

Let $\left\{ \delta_{n}\right\} \subset\Delta$ such that $\delta_{n}\to\delta\in\mathbb{T}$.
For each $\delta_{n}$, denote its witness by $x^{n}\in A$. Denote $x^{n,u}$
and $x^{n,l}$ the two decimal expansions of $x^{n}$, i.e. $\left|V\tau\left(x^{n,u}\right)-V\tau\left(x^{n,l}\right)\right|=\delta_{n}$.
Since $\left(x^{n,u},x^{n,l}\right)\subset\left(\Lambda_{10}^{\mathbb{N}}\right)^{2}$
and due to the compactness of $\Lambda_{10}^{\mathbb{N}}$ we can
assume without loss of generality that $\left(x^{n,u},x^{n,l}\right)\to\left(y,z\right)$.
$V$ is continuous and $V\left(x^{n,u}\right)=V\left(x^{n,l}\right)$
for all $n$, thus $V\left(y\right)=V\left(z\right)$. This means
that $y$ and $z$ are two representations of the same element of
the torus. The continuity of $V\tau$ implies $\left|V\tau\left(y\right)-V\tau\left(z\right)\right|=\delta$,
so $V\left(y\right)=V\left(z\right)$ is a witness of $\delta$. Therefore
$\delta\in\Delta$ and $\Delta$ is a closed set.

Next, we show that $\Delta\subset\mathbb{T}$ is $\left\{ 2^{k_1}10^{k_2}\right\}_{k_1,k_2\in\mathbb{N}} $-invariant.
Let $\delta\in\Delta$ and let $x\in A$ be one of its witnesses. Define $y=2x\in\mathbb{T}$.
Obviously, $y\in A$. For the two decimal expansions of $y$:
\begin{eqnarray*}
\left|V\tau\left(y^{u}\right)-V\tau\left(y^{l}\right)\right| & = & \left|V\tau\mu_{2}\left(x^{u}\right)-V\tau\mu_{2}\left(x^{l}\right)\right|=\\
\left|V\mu_{2}\tau\left(x^{u}\right)-V\mu_{2}\tau\left(x^{l}\right)\right| & = & \left|2\cdot V\tau\left(x^{u}\right)-2\cdot V\tau\left(x^{l}\right)\right|=2\delta\in\mathbb{T}\end{eqnarray*}
This means that $y\in A$ is in fact a witness for $2\delta=m_{2}\left(\delta\right)$,
hence $m_{2}\left(\Delta\right)\subset\Delta$. The same is valid
for multiplication by 10 (note that in this case it is not necessary for $\tau$
to commute with $\mu_{2}$, but only to be a cellular automaton transformation, so
by definition it commutes with $\sigma=\mu_{10}$).

To conclude, $\Delta\subset\mathbb{T}$ is closed and invariant under
the action of the non-lacunary semigroup $\left\{ 2^{k_1}10^{k_2}\right\} $.
By Furstenberg's Theorem, $\Delta$ must be either $\mathbb{T}$ or
finite. Recall that we need to show that $\Delta=\left\{ 0\right\} $.

Assume by contradiction that there exists $\delta\in\Delta$ such that $\delta\ne 0$ and let
$x$ be a witness of $\delta$. Denote $x^{l}=a_{0}a_{1}...a_{k}b999...$,
$x^{u}=a_{0}a_{1}...a_{k}\left(b-1\right)_{mod\,10}000...$.
Then $\left(\frac{x}{10}\right)^{l}=0a_{0}a_{1}...a_{k}b999...$,
$\left(\frac{x}{10}\right)^{u}=0a_{0}a_{1}...a_{k}\left(b-1\right)_{mod\,10}000...$.
If $r\le k+1$ ($r$ is the radius of $\tau$), then $\left|V\tau\left(\left(\frac{x}{10}\right)^{u}\right)-V\tau\left(\left(\frac{x}{10}\right)^{l}\right)\right|=\frac{\delta}{10}\in\Delta$.
The same argument for $\frac{x}{10^{2}}$ implies that $\frac{\delta}{10^{2}}\in\Delta$, similarly $\frac{x}{10^{j}}$ implies that $\frac{\delta}{10^{j}}\in\Delta$ for all $j\in\mathbb{N}$, hence $\Delta$ is infinite.

If $r\ge k+2$, note that for every $c\in\mathbb{N}$, $\left|V\tau\left(\left(\frac{x}{10^{c}}\right)^{u}\right)-V\tau\left(\left(\frac{x}{10^{c}}\right)^{l}\right)\right|\ne0$,
so we can use the same procedure for $\frac{x}{10^{c}}$ with a sufficiently
large $c$, concluding again that $\Delta$ is infinite. (In this
case, the decimal expansions are: \[
\left(\frac{x}{10^{c}}\right)^{u}=0...0a_{1}...a_{k}b000...\mbox{ and }\left(\frac{x}{10^{c}}\right)^{l}=0...0a_{1}...a_{k}\left(b-1\right)_{mod\,10}999...\]
where the number of $0$ at the beginning of each expansion is $c$.
Choosing $c$ such that $r\le c+k+1$ reduces the case to
the preceding one.)

We showed that if there exists $\delta\in\Delta$, $\delta\ne 0$, then $\Delta$ contains
infinitely many elements, which implies $\Delta=\mathbb{T}$. But
this is a contradiction since $\Delta$ is a countable set.
Thus $\Delta=\left\{0\right\}$ and $\tilde{\tau}$ is well defined.
Standard compactness argument shows that $\tilde{\tau}$ is continuous.
\end{proof}

Until now we showed that every transformation that commutes with
$\mu_{p}$ represents some continuous torus function. The next theorem, due to Blanchard, Host and Maass \cite{0881.58035}, classifies the continuous torus functions that can be represented by a cellular automaton transformation, for general $s$.
Although our definition for {}``representing a torus function'' is somewhat different from that of \cite{0881.58035}, the same arguments and constructions which appear in \cite{0881.58035} can be applied here. They proved:
\bhm
\label{thm:representable-is-mul} If $\tau\in CAT\left(\Lambda_{s}^{\mathbb{N}}\right)$
represents a continuous torus function $\tilde{\tau}:\mathbb{T}\to\mathbb{T}$, then $\tilde{\tau}$ is of the form $\tilde{\tau}\left(x\right)=ux$ where $u\in\mathbb{Z}$ divides some positive power of $s$, or $\tilde{\tau}$ is a constant function $\tilde{\tau}\left(x\right)\equiv\frac{t}{s-1}$, where $t\in\left\{ 0,1,\dots,s-2\right\} $.
\medbreak

Combining Lemma \ref{lem:If-f-commutes-f-can-be-lifted} and BHM Theorem completes the proof of Theorem \ref{thm:multi-is-MC}:

\subsection*{Proof of Theorem \ref{thm:multi-is-MC}}

Where $s$ is not a power of prime, clearly, $\left\langle \mu_{p_{1}},...,\mu_{p_{d}},\mu_{0}\right\rangle$
is commutative since the multiplications on the torus, $m_{p_{1}},...,m_{p_{d}}$ and $m_{0}$ commute.

If $\tau\in CAT\left(\Lambda_{s}^{\mathbb{N}}\right)$ commutes with this semigroup, in particular it commutes with $\mu_{p_{1}}$ and by Lemma \ref{lem:If-f-commutes-f-can-be-lifted} , $\tau$ represents a continuous torus function $\tilde{\tau}$.
By BHM Theorem, $\tilde{\tau}=m_{u}$, where each prime $p|u$ is one of $p_{1},..,p_{d}$
or $\tilde{\tau}$ is a constant map $\tilde{\tau}\equiv\frac{t}{s-1}$
for some $t\in\left\{ 0,1,\dots s-2\right\} $. Consider the latter
case where $\tilde{\tau}\equiv\frac{t}{s-1}$. Note that a constant
map commutes with another map iff the image of the constant map is
a fixed point of the other. Thus, in order to commute with all $\mu_{p_{i}}$,
$\frac{t}{s-1}$ must be a fixed point of $m_{p_{i}}$ for all $i$.
Clearly, $0$ is a fixed point of all $m_{p_{i}}$ so we add $\mu_{0}$
to the semigroup in order to get maximality%
\footnote{Note that sometime 0 can be replaced by other digits that have this property: $\frac{1}{2}$
is fixed point of both $m_{3}$ and $m_{5}$. In $\Lambda_{15}$,
$\frac{1}{2}$ is represented by the constant sequence $77\dots$ .
Denoting $\tau\left(a\right)=77\dots$, we
get that $\left\langle \mu_{3},\mu_{5},\tau \right\rangle \subset CAT\left(\Lambda_{15}\right)$
is MC as well as $\left\langle \mu_{3},\mu_{5},\mu_{0} \right\rangle \subset CAT\left(\Lambda_{15}\right)$.
However, a commutative semigroup can have at most one constant map
and for the general statement we choose $\mu_{0}$.%
}.

We conclude that when $s$ is not a power of prime,  $\tau\in\left\langle \mu_{p_{1}},...,\mu_{p_{d}},\mu_{0}\right\rangle $ and hence the semigroup is MC.

Note that when $s=p^{m}$, the proof above does not hold, since if we define $\Delta\subset\mathbb{T}$ as in the proof of Lemma \ref{lem:If-f-commutes-f-can-be-lifted}
then $\Delta$ is $\left\{ p^{k_1}s^{k_2}\right\} $-invariant.
$\left\{ p^{k_1}s^{k_2}\right\} =\left\{ p^{k_1}\right\} \subset\mathbb{N}$
is lacunary and therefore, we cannot apply Furstenberg's Theorem to conclude that
that $\Delta=\left\{ 0\right\}$.

We will show now that when $s=p^{m}$ then the semigroup is actually not MC.
Assume $s=p^m$. Define the block map $f:\Lambda_p^{m}\to\Lambda_{p^m}$ by $f \left(a_0a_1\dots a_{m-1} \right) = p^{m-1}a_0+p^{m-2}a_1+\dots+pa_{m-2}+a_{m-1}$. $f$ defines a function $\phi:\Lambda_p^\mathbb{N} \to \Lambda_{p^m}^\mathbb{N}$ by $\phi\left(a\right)_k=f\left(a_{[mk,m(k+1))}\right)$.
$\phi$ interprets the first $m$-word as a $p$ expansion of an element in $p^m$ and assigns it as the first coordinate. The second $m$-word (the symbols $a_{m}a_{m+1}\dots a_{2m-1}$) is interpreted in the same way and the result is assigned to the second coordinate and so on.

This gives a topological conjugacy $$\phi : \left(\Lambda_p^\mathbb{N},\left\langle \mu_p^{(p)},\mu_{0}^{(p)}\right\rangle\right) \to \left(\Lambda_{p^m}^\mathbb{N},\left\langle \mu_p^{(p^m)},\mu_{0}^{(p^m)}\right\rangle\right),$$ which means that $\phi$ forms a homeomorphism between $\Lambda_p^\mathbb{N}$ and $\Lambda_{p^m}^\mathbb{N}$, and that there exists a bijection $\tau\mapsto \tilde{\tau}$ from $\left\langle \mu_p^{(p)},\mu_{0}^{(p)}\right\rangle$ to $\left\langle \mu_p^{(p^m)},\mu_{0}^{(p^m)}\right\rangle$ such that for any $\tau\in \left\langle \mu_{p}^{(p)},\mu_{0}^{(p)}\right\rangle $, $\phi\left(\tau\right) = \tilde{\tau}\left(\phi\right)$.
Explicitly, if $\tau=\left(\mu_p^{(p)}\right)^{k_1}\left(\mu_0^{(p)}\right)^{k_2}$ then
$\tilde{\tau}=\left(\mu_p^{(p^m)}\right)^{k_1}\left(\mu_0^{(p^m)}\right)^{k_2}$.
Any $\nu \in CAT\left(\Lambda_p^\mathbb{N}\right)$ that commutes with $\left\langle \mu_p^{(p)},\mu_{0}^{(p)}\right\rangle$ defines $\bar{\nu} \in CAT\left(\Lambda_{p^m}^\mathbb{N}\right)$ by $\bar{\nu}=\phi(\nu(\phi^{-1}))$. The topological conjugacy asserts that $\bar{\nu}$ commutes with $\left\langle \mu_p^{(p^m)},\mu_{0}^{(p^m)}\right\rangle$.
Hence, in order to show that $\left\langle \mu_p^{(p^m)},\mu_{0}^{(p^m)}\right\rangle \subset CAT\left(\Lambda_{p^m}^\mathbb{N}\right)$ is not MC, it is enough to show that $\left\langle \mu_p^{(p)},\mu_{0}^{(p)}\right\rangle =\left\langle \sigma ,\mu_{0}\right\rangle  \subset CAT\left(\Lambda_{p}^\mathbb{N}\right)$ is not MC.

Now, any transformation in $CAT\left(\Lambda_{p}^\mathbb{N}\right)$ with $00\dots\in\Lambda_{p}^{\mathbb{N}}$      as a fixed point commutes with $\left\langle \sigma ,\mu_{0}\right\rangle$, or equivalently, any block map that sends the word $00\dots0\mapsto 0$ commutes with both $\sigma$ and $\mu_0$ and this completes the proof.
\hfill $\Box$

\begin{remark}
$\mu_{-1}\in CAT\left(\Lambda_s^\mathbb{N}\right)$ is the {}``mirror map'' e.g. in the case of $s=10$ its block map is $0\leftrightarrow9,1\leftrightarrow8,...,5\leftrightarrow5$.
From BHM Theorem we have that a non constant transformation that represents a continuous torus function must be of the form $\mu_u$ where $u\in\mathbb{Z}$. In the case $s=10$ for example, $\mu_{-1}$ also commutes with both $\mu_2$ and $\mu_5$.
Note that for positive $u$, $\mu_u$  keeps the same representations: if $a\in \Lambda_{10}^\mathbb{N}$ is the upper representation of $x\in\mathbb{T}$ then $\mu_u\left(a\right)$ is the upper representation of $ux\in\mathbb{T}$, while $\mu_u$ for $u<0$ flips between the representations. Thus, although $m_{-1}$ and $m_0$ commute, $\mu_{-1}$ and $\mu_0$ do not commute.
\end{remark}

To summarize, we have shown a way to construct semigroups which are both MC and ID whenever $s\ne p^{m}$ for a prime $p$. The same construction, when $s=p^m$, is neither MC nor ID.

Semigroups of $CAT\left(\Lambda_{p^{m}}^{\mathbb{N}}\right)$ which are both MC and ID are described in the next section.

\section{\label{sec:Algebraic-Semigroups}Linear Cellular Automata Transformations}

In this section we consider $\Lambda_{s}$ as the ring $\frac{\mathbb{Z}}{s\mathbb{Z}}$.
Note that the ring's structure depends on the interpretation of each
symbol in $\Lambda_{s}$; every bijection between the symbol set $\Lambda_{s}$
and the elements of $\frac{\mathbb{Z}}{s\mathbb{Z}}$ defines
a different ring structure on $\Lambda_{s}$. For simplicity choose
the identity permutation and treat $\Lambda_{s}$ as $\frac{\mathbb{Z}}{s\mathbb{Z}}$.
Thus, $\Lambda_{s}^{\mathbb{N}}$ is endowed with a structure of a
$\Lambda_{s}$-module.

When $s=p^m$ is power of a prime, one can give the symbol set a field structure (again, the interpretation of the symbols affects the field structure). We will distinguish between these objects by the following notations: $\Lambda_s$ is the ring $\frac{\mathbb{Z}}{s\mathbb{Z}}$ (either when $s=p^m$ or not) and $\mathbb{F}_{p^m}$ is the field of $p^m$ elements. Thus, $\mathbb{F}_{p^m}^\mathbb{N}$ is endowed with a structure of an infinite vector space over $\mathbb{F}_{p^m}$.

Define $LCAT\left(\Lambda_{s}^{\mathbb{N}}\right)$ to be the subset
of $CAT\left(\Lambda_{s}^{\mathbb{N}}\right)$, which consists of
all the cellular automata transformations which are linear, i.e. $LCAT\left(\Lambda_{s}^{\mathbb{N}}\right)=CAT\left(\Lambda_{s}^{\mathbb{N}}\right)\cap Hom_{\Lambda_{s}}\left(\Lambda_{s}^{\mathbb{N}},\Lambda_{s}^{\mathbb{N}}\right)$.
Clearly, $LCAT\left(\Lambda_{s}^{\mathbb{N}}\right)$ is a semigroup
since both $CAT\left(\Lambda_{s}^{\mathbb{N}}\right)$ and $Hom_{\Lambda_{s}}\left(\Lambda_{s}^{\mathbb{N}},\Lambda_{s}^{\mathbb{N}}\right)$
are closed under composition.

Given $\tau\in LCAT\left(\Lambda_{s}^{\mathbb{N}}\right)$, denote
its block map by $f_{\tau}:\Lambda_{s}^{r+1}\to\Lambda_{s}$. The linearity
of $\tau$ imposes the following condition on $f_{\tau}$:
\[
\alpha f_{\tau}\left(a_{[0,r]}\right)+\beta f_{\tau}\left(b_{[0,r]}\right)=f_{\tau}\left(\alpha a_{[0,r]}+\beta b_{[0,r]}\right)\]
for any $\alpha,\beta\in\Lambda_{s}$
and $a_{[0,r]},b_{[0,r]}\in\Lambda_{s}^{r+1}$.

This means that $\tau\in LCAT\left(\Lambda_{s}^{\mathbb{N}}\right)$
iff its block map is a $\Lambda_{s}$-linear functional on $\Lambda_{s}^{r+1}$.
We can describe the action of $\tau\in LCAT\left(\Lambda_{s}^{\mathbb{N}}\right)$
by a {}``shift-polynomial'' in the manner described below.

Consider the standard basis $\left\{ e^{i}\right\}_{i=0}^{r} $ of $\Lambda_{s}^{r+1}$ and let $c_{i}=f_{\tau}\left(e^{i}\right)$. Let $a_{0}...a_{r}\in\Lambda_{s}^{r+1}$, then $f_{\tau}\left(a_{0}...a_{r}\right)=c_{0}\cdot a_{0}+c_{1}\cdot a_{1}+...+c_{r}\cdot a_{r}$.
Now associate with $\tau$ the polynomial $p_{\tau}\left(x\right)=c_{0}+c_{1}x+c_{2}x^{2}+...+c_{r}x^{r}$,
$p_{\tau}\in\Lambda_{s}\left[x\right]$. Observe that substituting
a cellular automaton transformation in some polynomial $p\in\Lambda_{s}\left[x\right]$
yields a cellular automaton transformation. In particular, substituting the shift $\sigma$ in $p_{\tau}$ yields $\tau$.

One can check that the ring $LCAT\left(\Lambda_{s}^{\mathbb{N}}\right)$
with composition as multiplication and point-wise addition, is isomorphic to $\Lambda_{s}\left[x\right]$ by $\tau\mapsto p_{\tau}$. This was shown in \cite{ferguson1962some} and discussed in \cite{coven1979commuting} for the case $s=2$. In particular, $LCAT\left(\Lambda_{s}^{\mathbb{N}}\right)$
is a commutative semigroup, since the polynomial ring is commutative.

In the same manner define $LCAT\left(\mathbb{F}_{p^m}^\mathbb{N}\right)$ to be all the transformations which are linear. The same isomorphism to the polynomial ring holds for $LCAT\left(\mathbb{F}_{p^m}^\mathbb{N}\right)$ and hence the commutativity property follows.

In this section we will prove the following:
\begin{theo} \label{thm:LCAT-over-field-is-ID}
The semigroup $LCAT\left(\mathbb{F}_{p^m}^{\mathbb{N}}\right)$ is an ID for any finite field $\mathbb{F}_{p^m}$.
\end{theo}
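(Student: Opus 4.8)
The plan is to show that any infinite closed $LCAT(\mathbb{F}_{p^m}^{\mathbb{N}})$-invariant set $A$ must be all of $\mathbb{F}_{p^m}^{\mathbb{N}}$. First I would exploit the special elements of $LCAT$: the shift $\sigma$ (which belongs to $LCAT$ since its polynomial is $x$), and the ``constant-digit scaling'' maps $a\mapsto \lambda a$ for $\lambda\in\mathbb{F}_{p^m}$ (polynomial the constant $\lambda$). Since $A$ is infinite and closed in a compact space, it contains a limit point, hence two distinct sequences $a,b\in A$; applying powers of $\sigma$ and passing to a subsequential limit (using compactness of $A$), I would produce an element of $A$ of the form $c\cdot e^{0}$-type, i.e. a sequence $w$ whose $0$-th coordinate $w_0$ is nonzero — more precisely, I would show $A$ contains a sequence starting with a prescribed nonzero symbol in position $0$, say by shifting so the first disagreement between $a$ and $b$ lands at coordinate $0$ and then subtracting (but subtraction is not available from the semigroup alone, so one works with limits of shifts of a single element and uses that $A$ is merely invariant, not a subgroup). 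The cleaner route: from a nonconstant-in-the-limit element, obtain via $\sigma^n$-limits a sequence $u\in A$ with $u_0\neq 0$ but $u$ not identically a constant sequence — and then show that the orbit closure of any such $u$ under all linear transformations is everything.

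The heart of the argument is the following: given $u\in A$ with some coordinate, WLOG $u_0$, equal to a nonzero field element $\alpha$, I claim $A\supseteq \mathbb{F}_{p^m}^{\mathbb{N}}$. Apply the transformation with block map $f(a_0a_1)= \gamma a_0$ for suitable $\gamma$, i.e. the linear map $\mu$ with $p_\mu(x)=\alpha^{-1}\beta$ (a constant), sending $u$ to a sequence with $0$-th coordinate $\beta$; more generally, using polynomials $p(x)=\sum c_i x^i$ applied to $u$, the coordinate $0$ of $p(\sigma)(u)$ is $\sum_i c_i u_i$. So if $u$ has infinitely many coordinates, the set of values $\{(\,p(\sigma)u\,)_0 : p\in\mathbb{F}_{p^m}[x]\}$ is an $\mathbb{F}_{p^m}$-subspace of $\mathbb{F}_{p^m}$, and it is nonzero (take $p$ constant, or $p=x^i$ with $u_i\neq 0$), hence equals $\mathbb{F}_{p^m}$. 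Combining this with the shift, I would show by a density/compactness argument that for every finite word $\omega\in\mathbb{F}_{p^m}^{n}$ there is an element of $A$ agreeing with $\omega$ on $[0,n)$: build it coordinate by coordinate, at each step applying an appropriate $p(\sigma)$ to fix the next coordinate while using that a fresh ``nonconstant tail'' of $u$ (guaranteed if $u$ is not eventually constant, which one arranges) still has a nonzero coordinate available to hit the desired value. Then the cylinder sets of $A$ are all nonempty, so $A$ is dense, hence $A=\mathbb{F}_{p^m}^{\mathbb{N}}$.

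A subtlety I need to handle: the linear transformations have bounded radius, so $p(\sigma)u$ depends only on finitely many coordinates of $u$; to fix coordinate $k$ of a candidate element while preserving coordinates $0,\dots,k-1$ already set, I would use transformations supported far to the right (polynomials $x^j p(x)$ with $j$ large), which change coordinates $0,\dots,k-1$ only if $u$ is nonzero there — this forces me to work not with a single $u$ but to iterate: take limits to produce, for each target word, an element of $A$ with that prefix. The honest way is probably: show $A$ is a closed sub-\emph{module} is false, but show instead that $\overline{\{p(\sigma)v : p\in\mathbb{F}_{p^m}[x]\}}=\mathbb{F}_{p^m}^{\mathbb{N}}$ for any $v$ that is not eventually periodic, by the prefix argument above, and separately extract such a non-eventually-periodic $v$ from the infinite set $A$ (an infinite closed shift-invariant set contains a non-eventually-periodic point, since there are only finitely many periodic points of each period, so an infinite closed set cannot consist solely of eventually periodic points — in fact it contains a whole orbit closure of an aperiodic point by a standard König's lemma / compactness argument).

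I expect the main obstacle to be the bookkeeping in the prefix-filling step: making precise that one can simultaneously preserve an already-constructed prefix and hit an arbitrary next symbol, given that the only operations available are applying fixed-radius linear block maps and taking limits inside the closed set $A$. The resolution is to not insist on a single element doing everything, but to produce, for each $n$ and each word $\omega$ of length $n$, \emph{some} element of $A$ with prefix $\omega$ — which suffices for density — by an induction on $n$ where at stage $n$ one applies a transformation $x^{N}\,q(x)$ with $N$ chosen large enough that it reads only from a region of the aperiodic point $v$ that is ``generic'', together with the already-available element from stage $n-1$, and uses the surjectivity of $p\mapsto (p(\sigma)v)_0$ restricted to high-degree $p$ (which holds because $v$ has nonzero coordinates arbitrarily far out — else $v$ would be eventually zero, hence periodic).
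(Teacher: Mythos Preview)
Your proposal has two genuine gaps. First, the claim that an infinite closed shift-invariant subset of $\mathbb{F}_{p^m}^{\mathbb{N}}$ must contain a non-eventually-periodic point is false: the set $\{0^n 1\, 0^\infty : n\ge 0\}\cup\{0^\infty\}$ is infinite, closed, $\sigma$-invariant, and consists entirely of eventually periodic sequences. Your justification (``only finitely many periodic points of each period'') confuses periodic with eventually periodic. The paper does not attempt to extract an aperiodic point; instead it treats separately the case where $A$ contains a non-eventually-periodic (``rich'') sequence and the case where $A$ consists only of eventually periodic sequences, and in the latter case argues directly that the periods (or the pre-periods) must be unbounded and exploits that.

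Second, your prefix-filling induction does not work as written. Applying $x^{N}q(\sigma)$ to $v$ produces a sequence unrelated to your stage-$(n-1)$ element, and you cannot add elements of $A$ since $A$ is not assumed to be a submodule; applying a new linear map to the stage-$(n-1)$ element destroys the prefix you already built. The missing idea is that one should hit the entire target prefix with a \emph{single} linear block map. For a rich sequence $a$ and any $n$, the paper proves (its Lemma~\ref{lem:rich-independed}) that for some $k$ the vectors $a_{[0,k)},a_{[1,1+k)},\dots,a_{[n-1,n-1+k)}\in\mathbb{F}_{p^m}^{k}$ are linearly independent; one then chooses a linear functional $f$ on $\mathbb{F}_{p^m}^{k}$ with $f(a_{[i,i+k)})=x_i$ for all $i<n$, and the associated $\tau\in LCAT$ satisfies $\tau(a)_{[0,n)}=x_{[0,n)}$. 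Analogous linear-independence lemmas handle the eventually periodic cases. Your coordinate-$0$ observation, that $\{(p(\sigma)v)_0 : p\}=\mathbb{F}_{p^m}$, is the $n=1$ instance of this; the step you are missing is precisely the simultaneous solvability for $n>1$, which is a linear-independence statement about shifted windows of $v$, not something an inductive patching can replace.
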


\begin{theo} \label{thm:LCAT-is-ID}
The semigroup $LCAT\left(\Lambda_{s}^{\mathbb{N}}\right)$ is an ID
semigroup if and only if $s$ is prime.
\end{theo}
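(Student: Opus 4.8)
The plan is to treat the two implications separately. The forward implication (``$s$ prime $\Rightarrow$ ID'') will follow at once from Theorem \ref{thm:LCAT-over-field-is-ID}, and the converse (``$s$ not prime $\Rightarrow$ not ID'') by exhibiting an explicit infinite closed proper invariant set, uniformly for all composite $s$ (prime power or not).

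Suppose first that $s=p$ is prime. Then $\Lambda_p=\frac{\mathbb{Z}}{p\mathbb{Z}}$ is itself a field, so the $\Lambda_p$-module structure on $\Lambda_p^{\mathbb{N}}$ coincides with the $\mathbb{F}_p$-vector space structure on $\mathbb{F}_p^{\mathbb{N}}$; in particular $LCAT\left(\Lambda_p^{\mathbb{N}}\right)=LCAT\left(\mathbb{F}_p^{\mathbb{N}}\right)$ as subsemigroups of $CAT\left(\Lambda_p^{\mathbb{N}}\right)$. Theorem \ref{thm:LCAT-over-field-is-ID} applied with $m=1$ then gives that this semigroup is ID, which settles this direction.

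For the converse I would argue by contraposition: assume $s$ is composite and fix a prime divisor $p$ of $s$; since $s\ne p$ we have $s/p\ge 2$. Let $I=p\Lambda_s$ denote the principal ideal generated by $p$ in the ring $\frac{\mathbb{Z}}{s\mathbb{Z}}$, so that $\left|I\right|=s/p\ge 2$ and $1\notin I$ (as $\gcd(p,s)=p>1$), and set
\[
A=\left\{\,a\in\Lambda_s^{\mathbb{N}}\ :\ a_k\in I\text{ for all }k\in\mathbb{N}\,\right\}.
\]
I would then check the three required properties. First, $A$ is closed, being the intersection over $k\in\mathbb{N}$ of the clopen sets $\left\{a:a_k\in I\right\}$. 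Second, $A$ is a proper infinite subset of $\Lambda_s^{\mathbb{N}}$: it is uncountable because $\left|I\right|\ge 2$, and it is proper because the constant sequence $111\dots$ does not lie in it. Third, $A$ is $LCAT$-invariant: if $\tau\in LCAT\left(\Lambda_s^{\mathbb{N}}\right)$ has block map $f_\tau\left(a_{[0,r]}\right)=\sum_{i=0}^{r}c_i a_i$ and $a\in A$, then each summand $c_i a_{k+i}$ lies in the ideal $I$, hence $\tau(a)_k=f_\tau\left(a_{[k,k+r]}\right)\in I$ for every $k$, i.e. $\tau(a)\in A$. Thus $A$ is an infinite closed proper invariant set, so $LCAT\left(\Lambda_s^{\mathbb{N}}\right)$ fails to be ID.

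Once Theorem \ref{thm:LCAT-over-field-is-ID} is available the whole argument is short, so the genuine difficulty sits in that theorem (whose proof adapts the symbolic argument for the full semigroup $CAT\left(\Lambda_s^{\mathbb{N}}\right)$ to the linear setting over a finite field), not in the present deduction. The only point here that needs care is that the invariance of $A$ uses precisely that $I$ is an \emph{ideal}: closure under addition and under multiplication by arbitrary elements of $\Lambda_s$ is exactly what makes $\sum_i c_i a_{k+i}\in I$ for every choice of coefficients $c_i$, whereas a merely additively closed subset of $\Lambda_s$ would not in general be preserved by every linear transformation.
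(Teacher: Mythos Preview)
Your proposal is correct and follows essentially the same approach as the paper: the prime case is reduced to Theorem~\ref{thm:LCAT-over-field-is-ID} via the identification $\Lambda_p=\mathbb{F}_p$, and the composite case is handled by the explicit invariant set $A=p\Lambda_s^{\mathbb{N}}$. Your write-up is slightly more detailed (you spell out closedness and emphasize the ideal property of $p\Lambda_s$), but the argument is the same.
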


\begin{theo} \label{thm:LCAT-over-field-is-MC}
The semigroup $LCAT\left(\mathbb{F}_{p^m}^\mathbb{N}\right)$ is maximal commutative for any finite field $\mathbb{F}_{p^m}$.
\end{theo}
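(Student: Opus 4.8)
The plan is to verify the two clauses of the definition of MC. The first — that $LCAT\left(\mathbb{F}_{p^m}^{\mathbb{N}}\right)$ is commutative — has already been recorded, since $\tau\mapsto p_\tau$ identifies it with the commutative polynomial ring $\mathbb{F}_{p^m}\left[x\right]$. So the whole content is clause (2): I must show that any $\mu\in CAT\left(\mathbb{F}_{p^m}^{\mathbb{N}}\right)$ which commutes with every linear transformation is itself linear. Fix a radius $t$ for $\mu$ (Curtis--Hedlund--Lyndon) and let $g=f_\mu\colon\mathbb{F}_{p^m}^{t+1}\to\mathbb{F}_{p^m}$ be its block map, so $\mu\left(a\right)_k=g\left(a_k a_{k+1}\cdots a_{k+t}\right)$; the goal is to prove $g$ is an $\mathbb{F}_{p^m}$-linear functional, for then $\mu\in LCAT\left(\mathbb{F}_{p^m}^{\mathbb{N}}\right)$.

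The argument uses only two elementary families of linear transformations. For $\alpha\in\mathbb{F}_{p^m}$ let $c_\alpha$ be the radius-$0$ transformation with block map $b\mapsto\alpha b$ (polynomial the constant $\alpha$); it is linear. Comparing $\mu c_\alpha$ and $c_\alpha\mu$ at coordinate $0$ gives $g\left(\alpha v\right)=\alpha\, g\left(v\right)$ for every word $v\in\mathbb{F}_{p^m}^{t+1}$, i.e. $g$ is homogeneous over $\mathbb{F}_{p^m}$ (and $g\left(0\right)=0$, taking $\alpha=0$). Next, set $N=t+1$ and let $\nu$ be the linear transformation with polynomial $1+x^{N}$, so $\nu\left(a\right)_k=a_k+a_{k+N}$. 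Comparing $\mu\nu$ and $\nu\mu$ at coordinate $0$ yields
\[
g\left(a_0+a_N,\; a_1+a_{N+1},\; \dots,\; a_t+a_{N+t}\right)\;=\;g\left(a_0\cdots a_t\right)+g\left(a_N\cdots a_{N+t}\right).
\]
Because $N>t$, the index blocks $\left\{0,\dots,t\right\}$ and $\left\{N,\dots,N+t\right\}$ are disjoint, so for arbitrary words $v,w\in\mathbb{F}_{p^m}^{t+1}$ I may choose $a$ with $a_{[0,t]}=v$ and $a_{[N,N+t]}=w$ (the remaining coordinates arbitrary); the displayed identity then reads $g\left(v+w\right)=g\left(v\right)+g\left(w\right)$. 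Hence $g$ is additive, and together with the homogeneity above $g$ is $\mathbb{F}_{p^m}$-linear, which finishes clause (2) and the proof.

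I do not anticipate a genuine obstacle; the one point that must be handled with care is the decoupling of coordinates, which is exactly why $\nu$ is taken with a gap $N$ strictly larger than the radius $t$ of $\mu$ — this is what allows $v$ and $w$ to be prescribed independently. (Enlarging $t$ is harmless, since any integer at least as large as the minimal radius is again a radius of $\mu$.) It is worth remarking that the field structure is never used: the same computation shows $LCAT\left(\Lambda_s^{\mathbb{N}}\right)$ is maximal commutative for every $s$, because $c_\alpha$ and $1+x^{N}$ are $\Lambda_s$-linear and ``homogeneous plus additive over $\Lambda_s$'' is precisely ``$\Lambda_s$-linear''. The homogeneity step is genuinely needed when $m>1$: the transformation with block map $b\mapsto b^{p}$ is additive but not $\mathbb{F}_{p^m}$-linear — and, consistently, it fails to commute with the maps $c_\alpha$.
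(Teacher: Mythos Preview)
Your proof is correct and follows the same overall strategy as the paper: establish commutativity via the polynomial-ring isomorphism, then prove maximality by commuting the unknown transformation with specific linear ones to force its block map to be homogeneous and additive. The homogeneity step is identical --- both you and the paper use the scalar transformations $c_\alpha$ (the constant polynomials).

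The additivity step, however, is handled by a genuinely different trick. The paper writes out the full commutation identity with an arbitrary linear transformation of the \emph{same} radius $r$, and then specializes the coefficients $c_i=x_{r-i}$ together with the input sequence $a=e^r$ to obtain $\tau(x_0\cdots x_r)=\sum_i x_i\,\tau(e^i)$ in one stroke. You instead commute with the single ``gapped'' transformation $\nu=1+x^{N}$ with $N=t+1>t$, so that the two input windows are disjoint and can be set independently to arbitrary $v,w$, giving $g(v+w)=g(v)+g(w)$ directly. Your route is more transparent (the decoupling is visible at a glance) at the cost of using a linear transformation of radius larger than that of $\mu$; the paper's route stays within radius $r$ but requires a slightly cleverer substitution. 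Both arguments work verbatim over $\Lambda_s$, as you note, which is exactly how the paper proves Theorem~\ref{thm:LCAT-is-MC} simultaneously.
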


\begin{theo} \label{thm:LCAT-is-MC}
The semigroup $LCAT\left(\Lambda_{s}^{\mathbb{N}}\right)$ is maximal commutative for any $s\ge2$.
\end{theo}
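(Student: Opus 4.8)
The plan is to reduce Theorem \ref{thm:LCAT-is-MC} to a one-line block-map computation, using that one half of maximal commutativity is already established: $LCAT\left(\Lambda_{s}^{\mathbb{N}}\right)$ is commutative because it is ring-isomorphic to $\Lambda_{s}\left[x\right]$ (shown above). So all that remains is maximality: if $\mu\in CAT\left(\Lambda_{s}^{\mathbb{N}}\right)$ commutes with every element of $LCAT\left(\Lambda_{s}^{\mathbb{N}}\right)$, then $\mu\in LCAT\left(\Lambda_{s}^{\mathbb{N}}\right)$. Let $f_{\mu}:\Lambda_{s}^{r+1}\to\Lambda_{s}$ be a block map of $\mu$, of radius $r$. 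Since $\tau\in LCAT\left(\Lambda_{s}^{\mathbb{N}}\right)$ exactly when its block map is $\Lambda_{s}$-linear, and since an additive map between $\frac{\mathbb{Z}}{s\mathbb{Z}}$-modules is automatically $\frac{\mathbb{Z}}{s\mathbb{Z}}$-linear (scalar multiplication by $\bar{n}$ being just $n$-fold addition), it suffices to show that $f_{\mu}$ is additive, i.e. $f_{\mu}\left(u+v\right)=f_{\mu}\left(u\right)+f_{\mu}\left(v\right)$ for all $u,v\in\Lambda_{s}^{r+1}$.

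To produce this additivity I would test $\mu$ against a single convenient member of $LCAT\left(\Lambda_{s}^{\mathbb{N}}\right)$: the transformation $\nu=\mathrm{id}+\sigma^{r+1}$, whose block map is $g\left(b_{0}b_{1}\dots b_{r+1}\right)=b_{0}+b_{r+1}$ and whose associated polynomial is $1+x^{r+1}$. Composing block maps, for any $a\in\Lambda_{s}^{\mathbb{N}}$ one gets $\left(\mu\nu\right)\left(a\right)_{0}=f_{\mu}\left(a_{0}+a_{r+1},\,a_{1}+a_{r+2},\,\dots,\,a_{r}+a_{2r+1}\right)$ while $\left(\nu\mu\right)\left(a\right)_{0}=f_{\mu}\left(a_{\left[0,r\right]}\right)+f_{\mu}\left(a_{\left[r+1,2r+1\right]}\right)$. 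The relation $\mu\nu=\nu\mu$ therefore gives
\[
f_{\mu}\left(a_{0}+a_{r+1},\,\dots,\,a_{r}+a_{2r+1}\right)=f_{\mu}\left(a_{\left[0,r\right]}\right)+f_{\mu}\left(a_{\left[r+1,2r+1\right]}\right)
\]
for every sequence $a$. The index blocks $\left\{0,\dots,r\right\}$ and $\left\{r+1,\dots,2r+1\right\}$ are disjoint, so $a_{\left[0,r\right]}$ and $a_{\left[r+1,2r+1\right]}$ may be prescribed independently as arbitrary words $u,v\in\Lambda_{s}^{r+1}$ (the remaining coordinates of $a$ being chosen freely); the displayed identity then reads precisely $f_{\mu}\left(u+v\right)=f_{\mu}\left(u\right)+f_{\mu}\left(v\right)$.

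Hence $f_{\mu}$ is additive, thus $\Lambda_{s}$-linear, so $\mu\in CAT\left(\Lambda_{s}^{\mathbb{N}}\right)\cap Hom_{\Lambda_{s}}\left(\Lambda_{s}^{\mathbb{N}},\Lambda_{s}^{\mathbb{N}}\right)=LCAT\left(\Lambda_{s}^{\mathbb{N}}\right)$, which together with commutativity proves Theorem \ref{thm:LCAT-is-MC}. There is no serious obstacle here; the only points needing care are the index bookkeeping when composing $f_{\mu}$ with $g$ (the exponent in $\nu$ must be at least $r+1$ so that the two windows genuinely separate, which is why $\nu=\mathrm{id}+\sigma^{r+1}$ is the natural choice) and the elementary module fact used at the end. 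I would note in passing that this last fact is exactly where $\frac{\mathbb{Z}}{s\mathbb{Z}}$ and $\mathbb{F}_{p^{m}}$ diverge: over a genuine field $\mathbb{F}_{p^{m}}$ with $m>1$ an additive block map need not be $\mathbb{F}_{p^{m}}$-linear (Frobenius-type maps), so for Theorem \ref{thm:LCAT-over-field-is-MC} the same computation must be supplemented by commutation with the scalar transformations $a\mapsto c\cdot a$ ($c\in\mathbb{F}_{p^{m}}$) to force homogeneity as well.
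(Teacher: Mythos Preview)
Your proof is correct and genuinely different from the paper's. The paper writes down the commutation relation $\tau\mu=\mu\tau$ for a \emph{generic} linear $\mu$ with coefficients $c_0,\dots,c_r$, obtaining the identity
\[
\sum_{i=0}^{r} c_i\,\tau\!\left(a_{[i,i+r]}\right)=\tau\!\left(\left(\sum c_i a_i\right)\left(\sum c_i a_{i+1}\right)\cdots\left(\sum c_i a_{i+r}\right)\right),
\]
and then makes two separate substitutions: $c_0=c$, $c_{>0}=0$ to force homogeneity, and $c_i=x_{r-i}$, $a=e^r$ to force additivity. You instead observe that over $\frac{\mathbb{Z}}{s\mathbb{Z}}$ additivity already implies homogeneity, and then extract additivity from commutation with the single element $\nu=\mathrm{id}+\sigma^{r+1}$, chosen so that the two windows $a_{[0,r]}$ and $a_{[r+1,2r+1]}$ are disjoint and hence independently prescribable. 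Your route is shorter and more conceptual for the ring case; the paper's route has the advantage of treating $\Lambda_s$ and $\mathbb{F}_{p^m}$ uniformly (homogeneity is proved rather than inherited), whereas---as you correctly note---your argument needs the extra commutation with scalar maps $a\mapsto c\cdot a$ to cover Theorem~\ref{thm:LCAT-over-field-is-MC} when $m>1$.
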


In order to prove that $LCAT\left(\mathbb{F}_{p^m}^{\mathbb{N}}\right)$ is an ID semigroup we first prove that $CAT\left(\Lambda_{s}^{\mathbb{N}}\right)$
is an ID semigroup for any $s$\footnote{Note that when considering $CAT\left(\Lambda_{s}^{\mathbb{N}}\right)$, the symbol set $\Lambda_s$ carries no structure.}. Then we show the property for the semigroup $LCAT\left(\mathbb{F}_{p^m}^{\mathbb{N}}\right)$.

Note that for $s\ne p^{m}$ this claim results from section \ref{sec:Arithmetic-semi-groups}
as follows: if $s=p_{1}^{e_{1}}\cdot\dots\cdot p_{d}^{e_{d}}$ ($d>1$),
then $\left\langle \mu_{p_{i}},\sigma\right\rangle $ is an ID semigroup
(for $i=1,2,\dots,d$). In particular, $CAT\left(\Lambda_{s}^{\mathbb{N}}\right)$
is also an ID semigroup, as it contains a sub-semigroup which is ID.

To prove that $CAT\left(\Lambda_{s}^{\mathbb{N}}\right)$ is ID for
any $s$, we will use the following definitions:
\begin{definition}
A sequence $a\in\Lambda_{s}^{\mathbb{N}}$
is a \textbf{rich sequence }if there exists $\left\{ n_{m}\right\} \subset\mathbb{N}$
with $n_{m}\to\infty$ such that for each $m\in\mathbb{N}$ there
exists $k\left(m\right)\in\mathbb{N}$ with the property $a_{[i,i+k)}\ne a_{[j,j+k)}$
for all $i\ne j<n_{m}$.
\end{definition}

\begin{definition}
A sequence $a\in\Lambda_{s}^{\mathbb{N}}$ is
a periodic sequence if for some $c>0$,  $a_{n}=a_{n+c}$
for all $n\in\mathbb{N}$. $a \in\Lambda_{s}^{\mathbb{N}}$
is \textbf{$\mathbf{c}$-periodic} if $c$ is the minimal integer
for which this holds. $c$ is called \textbf{the period of $ a $}.
\end{definition}

\begin{definition}
A sequence $a\in\Lambda_{s}^{\mathbb{N}}$
is a \textbf{$\mathbf{b,c}$-eventually periodic sequence }if $\sigma^{b}\left(a\right)$
is $c$-periodic, but $\sigma^{b-1}\left(a\right)$
is not periodic.
For simplicity the parameters $b,c$ will be omitted when possible.
\end{definition}

\begin{lemma}
\label{lem:A-sequence-rich-iff-not}A sequence $a\in\Lambda_{s}^{\mathbb{N}}$
is eventually periodic if and only if it is not rich.\end{lemma}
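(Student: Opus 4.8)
The plan is to prove both directions of the equivalence directly from the definitions. For one direction, suppose $a$ is eventually periodic, say $b,c$-eventually periodic, so $\sigma^b(a)$ is $c$-periodic. I claim $a$ is not rich. Indeed, for any window length $k$, consider indices $i$ in the periodic tail, i.e. $i \ge b$. If $i \equiv j \pmod c$ and $i,j \ge b$, then $a_{[i,i+k)} = a_{[j,j+k)}$ by periodicity. So among the indices $\{0,1,\dots,n-1\}$, for $n$ large the number of pairwise-distinct windows $a_{[i,i+k)}$ is bounded by $b + c$ (at most $b$ from the pre-periodic part and at most $c$ distinct windows from the tail), regardless of $k$. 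Hence there is no sequence $n_m \to \infty$ admitting, for each $m$, a window length $k(m)$ making all $n_m$ windows distinct — once $n_m > b+c$ this fails for every $k$. So $a$ is not rich.

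For the converse, suppose $a$ is not eventually periodic; I want to show $a$ is rich, i.e. produce $n_m \to \infty$ such that for each $m$ there is a $k(m)$ with $a_{[i,i+k(m))}$ pairwise distinct for all $i \ne j < n_m$. Fix $n_m = m$. I need a $k$ for which the $m$ windows $a_{[0,k)}, a_{[1,k)}, \dots, a_{[m-1,k)}$ are pairwise distinct. Suppose not: then for every $k$ there exist $i \ne j$ in $\{0,\dots,m-1\}$ with $a_{[i,i+k)} = a_{[j,j+k)}$. Since there are only finitely many pairs $(i,j)$ with $0 \le i < j \le m-1$, by pigeonhole one fixed pair $(i,j)$ works for infinitely many $k$, hence (windows being nested as $k$ grows) for all $k$. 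But $a_{[i,i+k)} = a_{[j,j+k)}$ for all $k$ means $a_{i+t} = a_{j+t}$ for all $t \ge 0$, i.e. $\sigma^i(a) = \sigma^j(a)$ with $i < j$; writing $c = j - i$, this gives $a_{n} = a_{n+c}$ for all $n \ge i$, so $a$ is eventually periodic, a contradiction. Therefore such a $k = k(m)$ exists for each $m$, and $a$ is rich.

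The main point to be careful about is the nestedness/monotonicity argument used in the pigeonhole step: if $a_{[i,i+k)} = a_{[j,j+k)}$, it is not automatic that the same holds for larger $k$, so one cannot literally say "one pair works for all $k$." The clean fix is the one used above: a single pair $(i,j)$ works for an \emph{infinite} set of values of $k$, and equality of the length-$k$ windows for arbitrarily large $k$ forces $a_{i+t} = a_{j+t}$ for every $t$, which is exactly $\sigma^i(a) = \sigma^j(a)$. I would state this as the key sub-step. Everything else is bookkeeping with the definitions of rich and eventually periodic; in particular the first direction only needs the crude bound "$\le b+c$ distinct windows of any fixed length," which kills richness outright since richness requires the count of distinct windows to be unbounded.
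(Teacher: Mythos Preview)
Your proof is correct. The paper does not actually write out a proof of this lemma: it simply remarks that ``one can give a purely symbolic proof'' and then offers a one-sentence heuristic via the evaluation map $V$ (eventually periodic $\leftrightarrow$ rational, rich $\leftrightarrow$ irrational), which on its own is more a restatement than an argument. Your proposal is precisely the symbolic proof the paper alludes to, including the key pigeonhole step on the finitely many index pairs $(i,j)$ and the observation that coincidence of windows for arbitrarily large $k$ forces $\sigma^i(a)=\sigma^j(a)$; the self-correction you flag about ``nestedness'' is exactly right and the fix you give is the correct one.
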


One can give a purely symbolic proof for this lemma. However, consider the evaluation function $V:\Lambda_{s}^{\mathbb{N}}\to\mathbb{T}$,
mentioned in section \ref{sec:Arithmetic-semi-groups}, the eventually periodic sequences are mapped to rational numbers while rich sequences are $s$-expansions of irrational numbers (for any $s$).

It follows from the definition that a rich sequence has an ample supply of words ($a_{[i,j]}$ is a word appearing in $a $). This will enable in Theorem \ref{thm:CAT-is-ID}, a construction of specific transformations showing that the image of the action of $CAT\left(\Lambda_{s}^{\mathbb{N}}\right)$ on any rich sequence is dense. On the other hand, by definition, eventually periodic sequences lake this property. However, the maximal amount of different words can be found in an eventually periodic sequence is described in the next lemma
\begin{lemma}
\label{lem:variaty-in-e.p}Let $a\in\Lambda_{s}^{\mathbb{N}}$
be $b,c$-eventually periodic, then there exists $k\in\mathbb{N}$
such that $a_{[i,i+k)}\ne a_{[j,j+k)}$ for all $i\ne j<b+c-1$\end{lemma}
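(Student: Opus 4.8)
I want to show that a $b,c$-eventually periodic sequence $a$ contains, for some window length $k$, at least $b+c-1$ pairwise distinct words of length $k$ starting at positions $0,1,\dots,b+c-2$. The natural idea is to pick $k$ large and argue by contradiction: if two windows $a_{[i,i+k)}$ and $a_{[j,j+k)}$ with $i\neq j<b+c-1$ coincide for every choice of $k$, then in particular they coincide for $k$ arbitrarily large, which forces $\sigma^i(a)=\sigma^j(a)$, i.e. $a$ is itself periodic with period dividing $|i-j|$ after position $\min(i,j)$.

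\medbreak

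First I would record the basic structural facts about a $b,c$-eventually periodic sequence. By definition $\sigma^b(a)$ is $c$-periodic while $\sigma^{b-1}(a)$ is not periodic; in particular the ``pre-periodic'' part has length exactly $b$ and the period is exactly $c$. The key combinatorial consequence I need is: if $i<j$ and $\sigma^i(a)=\sigma^j(a)$, then $a$ restricted to coordinates $\ge i$ is periodic with period $j-i$; but if moreover $i\le b-1$, this contradicts the minimality of $b$ (since then $\sigma^{b-1}(a)$ would be periodic), so we must have $i\ge b$; and once $i,j\ge b$ both tails lie in the periodic part, where $\sigma^i(a)=\sigma^j(a)$ forces $c\mid j-i$, hence in particular $j-i\ge c$, so $j\ge b+c$. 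Contrapositively: for all $i\neq j$ with $i,j<b+c-1$ (which forces $\min(i,j)\le b-1$ or $|i-j|<c$ with both $\ge b$), we have $\sigma^i(a)\neq\sigma^j(a)$.

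\medbreak

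Then the argument finishes by a finiteness/pigeonhole step. For each unordered pair $\{i,j\}$ with $i\neq j<b+c-1$, since $\sigma^i(a)\neq\sigma^j(a)$ there is a smallest index $m_{ij}$ with $a_{i+m_{ij}}\neq a_{j+m_{ij}}$; set $k_{ij}=m_{ij}+1$, so that $a_{[i,i+k_{ij})}\neq a_{[j,j+k_{ij})}$, and note that in fact $a_{[i,i+k)}\neq a_{[j,j+k)}$ for every $k\ge k_{ij}$. There are only finitely many such pairs (at most $\binom{b+c-1}{2}$ of them), so taking $k=\max_{i\neq j<b+c-1} k_{ij}$ gives a single window length that simultaneously separates all pairs: $a_{[i,i+k)}\neq a_{[j,j+k)}$ for all $i\neq j<b+c-1$, which is the assertion.

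\medbreak

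\textbf{Main obstacle.} The only delicate point is the case analysis in the second paragraph — correctly translating ``$\sigma^i(a)=\sigma^j(a)$'' into a statement about $b$ and $c$ and checking that positions below $b+c-1$ genuinely cannot be identified. In particular one must be careful that identifying two positions $i<j$ both lying in the periodic region ($i,j\ge b$) forces $c\mid j-i$ rather than merely $j-i$ being \emph{some} period; this uses that $c$ is the minimal period of $\sigma^b(a)$. Everything else (the pigeonhole over finitely many pairs, the ``$\ge k_{ij}$ still separates'' monotonicity) is routine.
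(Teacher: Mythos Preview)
Your proof is correct. The paper actually states this lemma without proof, so there is no argument to compare against; your approach --- show that $\sigma^i(a)\neq\sigma^j(a)$ for all $i\neq j$ in the given range by exploiting the minimality of $b$ and $c$, then take $k$ to be the maximum over finitely many separating lengths --- is exactly the natural one and is what the author presumably had in mind.

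Two small remarks. First, your case analysis in fact yields the slightly stronger conclusion $i\neq j<b+c$ rather than $i\neq j<b+c-1$: in Subcase~B ($b\le i<j$) you get $j-i<c$, which already contradicts $c\mid j-i$ with $j>i$, and the paper itself tacitly uses this stronger version later (in the proof that $B_{b,c}\subset A$). Second, the point you flag as the main obstacle --- that for a one-sided infinite periodic sequence the minimal period divides every period --- is indeed needed and follows from a short Euclidean-algorithm argument: if $x$ has periods $p<q$ then $x_m=x_{m+q}=x_{m+q-p}$ for all $m\ge 0$, so $q-p$ is also a period, and iterating gives $\gcd(p,q)$. You might state this explicitly rather than leave it as a caution.
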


Denote \[B_{b,c}=\left\{ a_{0}\dots a_{b-1}\overline{a_{b}\dots a_{b+c-1}}\right\} = \left\{ a_{0}\dots a_{b-1}a_{b}\dots a_{b+c-1}a_{b}\dots a_{b+c-1} \dots \right\} \subset\Lambda_{s}^{\mathbb{N}} \]
Note that the parameters $b,c$ are not minimal as they are in the
definition of a $b,c$-eventually periodic sequence. $B_{b,c}$ can
be defined alternatively by the subset of $\Lambda_{s}^{\mathbb{N}}$ consisting
of all $b',c'$-eventually periodic sequences with $c'|c$, $b'\le b$.
\begin{lemma}
For any $b,c$, $B_{b,c}$ is closed and $CAT\left(\Lambda_{s}^{\mathbb{N}}\right)$-invariant.
\end{lemma}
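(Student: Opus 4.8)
The plan is to verify directly that $B_{b,c}$ is closed and then that it is invariant under every cellular automaton transformation; both facts are elementary once we unwind the definitions, so the real content is just organizing the observations cleanly.

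First I would establish closedness. The set $B_{b,c}$ is precisely the set of sequences $a\in\Lambda_s^{\mathbb N}$ satisfying the (countably many) coordinate constraints $a_{b+i}=a_{b+i+c}$ for all $i\in\mathbb N$. Each such constraint $\{a : a_{j}=a_{j+c}\}$ is a clopen subset of $\Lambda_s^{\mathbb N}$ (it depends on only two coordinates, hence is a finite union of cylinders), and $B_{b,c}$ is the intersection of all of them, so it is closed. Alternatively, and perhaps more in keeping with the phrasing, one notes $B_{b,c}$ is a finite set of sequences in bijection with $\Lambda_s^{b}\times\Lambda_s^{c}$ only when... no: $B_{b,c}$ is actually finite, having exactly $s^{b+c}$ elements, and a finite subset of a Hausdorff space is closed. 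Either argument works; I would present the finiteness one as it is shortest, noting $\left|B_{b,c}\right|=s^{b+c}$.

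Next, invariance. Let $\tau\in CAT\left(\Lambda_s^{\mathbb N}\right)$ with block map $f_\tau$ of radius $r$, so $\tau(a)_k=f_\tau(a_k\cdots a_{k+r})$. Take $a\in B_{b,c}$, so $a_{n}=a_{n+c}$ for all $n\ge b$. For any $k\ge b$ we then have $a_k\cdots a_{k+r}=a_{k+c}\cdots a_{k+c+r}$ coordinatewise, hence $\tau(a)_k=\tau(a)_{k+c}$ for all $k\ge b$. This says exactly that $\sigma^{b}\tau(a)$ is $c$-periodic, i.e. $\tau(a)$ is $b',c'$-eventually periodic with $b'\le b$ and $c'\mid c$ — precisely the alternative description of membership in $B_{b,c}$ given just before the lemma. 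Therefore $\tau(a)\in B_{b,c}$, and since $\tau$ was arbitrary, $B_{b,c}$ is $CAT\left(\Lambda_s^{\mathbb N}\right)$-invariant.

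There is no substantial obstacle here; the only point requiring a little care is matching the shift indices so that the period-$c$ relation on $a$ past coordinate $b$ transfers to the period-$c$ relation on $\tau(a)$ past coordinate $b$ (it does, without any loss, precisely because $\tau$ commutes with the shift and reads coordinates only to the right). I would also remark that this lemma is exactly what is needed downstream: together with Lemma~\ref{lem:variaty-in-e.p}, it shows that an eventually periodic sequence generates a finite closed invariant set, which is the "easy direction" complementing the rich-sequence construction in Theorem~\ref{thm:CAT-is-ID}.
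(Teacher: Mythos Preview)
Your proof is correct and follows essentially the same approach as the paper: closedness via finiteness of $B_{b,c}$, and invariance by checking that a block map preserves the relation $a_n=a_{n+c}$ for $n\ge b$. The paper's version is terser (it simply asserts the invariance step as ``obvious''), whereas you spell out the block-map computation explicitly; your alternative closedness argument via an intersection of clopen cylinders is also fine but unnecessary once finiteness is noted.
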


\begin{proof}
Since $\left|B_{b,c}\right|<\infty$, $B_{b,c}$ is a closed
set. Obviously, if $a$ is $b,c$-eventually
periodic then for any $\tau\in CAT\left(\Lambda_{s}^{\mathbb{N}}\right)$,
$\tau\left(a\right)$ is $b',c'$-eventually
periodic with $c'|c$, $b'\le b$, thus $\tau\left(a\right)\in B_{b,c}$
which shows the invariance.
\end{proof}

\begin{lemma}
\label{lem:A-is-union-if-Bsc}Let $A\subset\Lambda_{s}^{\mathbb{N}}$
be a $CAT\left(\Lambda_{s}^{\mathbb{N}}\right)$-invariant subset.
If $A$ contains a $b,c$-eventually periodic sequence, then $B_{b,c}\subset A$.
\end{lemma}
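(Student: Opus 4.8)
The plan is to show that if $A$ contains a $b,c$-eventually periodic sequence $a$, then every element of $B_{b,c}$ is forced into $A$ by applying suitable transformations to $a$ and using the closedness of $A$. The key observation is that $B_{b,c}$ is a finite set, so it suffices to produce, for each $w\in B_{b,c}$, a single transformation $\tau\in CAT\left(\Lambda_{s}^{\mathbb{N}}\right)$ with $\tau\left(a\right)=w$. Since $A$ is $CAT\left(\Lambda_{s}^{\mathbb{N}}\right)$-invariant, this immediately gives $w\in A$.

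First I would use Lemma \ref{lem:variaty-in-e.p}: there is a $k\in\mathbb{N}$ such that the windows $a_{[i,i+k)}$ are pairwise distinct for $i\ne j<b+c-1$. Concretely, this means that reading a block of length $k$ starting at position $i$ uniquely determines $i$ (for $i$ in the relevant range of representatives modulo the eventual period), and likewise distinguishes these positions from all positions $\ge b+c-1$ up to the periodicity. Then, given a target sequence $w=w_0\dots w_{b-1}\overline{w_b\dots w_{b+c-1}}\in B_{b,c}$, I would define a block map $f_\tau:\Lambda_s^{k}\to\Lambda_s$ of radius $k-1$ by setting $f_\tau\left(a_{[i,i+k)}\right)=w_i$ for each $i$ (a well-defined assignment precisely because the windows are distinct and the pattern of $w$ is compatible with the period dividing $c$), and extending $f_\tau$ arbitrarily on all other length-$k$ words. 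The transformation $\tau$ with block map $f_\tau$ then satisfies $\tau\left(a\right)_i=w_i$ for all $i$, i.e. $\tau\left(a\right)=w$. One must take a little care that the assignment respects the eventual period: positions $i$ and $i+c$ with $i\ge b$ carry the same window in $a$ only if $c$ is a multiple of the true period of $\sigma^b(a)$, which is exactly the hypothesis built into the definition of $B_{b,c}$ — so $w_i=w_{i+c}$ is forced to be consistent, and no clash arises.

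The main obstacle is the well-definedness of $f_\tau$: one needs that whenever $a_{[i,i+k)}=a_{[j,j+k)}$ for two positions $i,j$ occurring in $a$, the demanded outputs agree, i.e. $w_i=w_j$. By Lemma \ref{lem:variaty-in-e.p} the only coincidences of windows occur between positions that are congruent modulo the true period $c_0$ of the periodic tail (and both lie in the periodic part), and for such positions the value of $w$ is by construction also periodic with period $c_0 \mid c$, so the constraint is automatically satisfied. I would also note that if needed one can enlarge $k$ to guarantee that windows starting in $\{0,\dots,b-1\}$ are distinguished from windows starting in the periodic tail. Having produced such a $\tau$ for each of the finitely many $w\in B_{b,c}$, we conclude $B_{b,c}=\{\tau(a):\tau\in CAT(\Lambda_s^{\mathbb N})\text{ appropriate}\}\subset A$, which completes the proof.
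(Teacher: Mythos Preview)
Your proposal is correct and follows essentially the same approach as the paper: use Lemma~\ref{lem:variaty-in-e.p} to get distinct length-$k$ windows, define a block map sending $a_{[i,i+k)}\mapsto w_i$, and observe that the resulting $\tau\in CAT(\Lambda_s^{\mathbb N})$ satisfies $\tau(a)=w$, whence $w\in A$ by invariance. One minor remark: you mention ``the closedness of $A$'' in your opening, but $A$ is not assumed closed in the lemma, and indeed your argument never uses closedness---only invariance---so that phrase should simply be dropped.
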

\begin{proof}
Let $a\in A$ be $b,c$-eventually periodic
and let $x\in B_{b,c}$. We will define a transformation $\tau \in CAT\left(\Lambda_{s}^{\mathbb{N}}\right)$ such that $\tau\left(a\right)=x $.
From Lemma \ref{lem:variaty-in-e.p} there exists $k\in\mathbb{N}$
such that $a_{[i,i+k)}\ne a_{[j,j+k)}$ for all $i,j<b+c$. Thus, the following block map is well defined:

\[
f\left(y_{0}y_{1}\dots y_{k}\right)=\begin{cases}
x_{i} & \mbox{if }y_{0} y_{1} \dots y_{k}=a_{[i,i+k)}\\
0 & \mbox{otherwise}\end{cases}\]
If $\tau$ is the transformation associated with $f$, then $\tau\left(a\right)=x $.
This can be done for any $x \in B_{b,c}$, thus
$B_{b,c}\subset A$.
\end{proof}

\begin{theo}
\label{thm:CAT-is-ID}
The semigroup $CAT\left(\Lambda_{s}^{\mathbb{N}}\right)$ is an ID
semigroup for any $s\ge2$.
\end{theo}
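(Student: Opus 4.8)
The plan is to take an arbitrary closed $CAT(\Lambda_s^{\mathbb{N}})$-invariant set $A\subset\Lambda_s^{\mathbb{N}}$ and show that either $A$ is finite or $A=\Lambda_s^{\mathbb{N}}$. The dichotomy will hinge on Lemma \ref{lem:A-sequence-rich-iff-not}: either every sequence in $A$ is eventually periodic, or $A$ contains a rich sequence. In the first case, I would argue that $A$ is finite. In the second case, I would show $A$ is dense, hence (being closed) all of $\Lambda_s^{\mathbb{N}}$.

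First, suppose $A$ contains only eventually periodic sequences. I would like to conclude $A$ is finite, but a priori $A$ could contain sequences with arbitrarily large periods. So the real first step is: assuming $A$ is \emph{infinite} and contains only eventually periodic sequences, derive a contradiction by producing a rich sequence in $A$ via the closedness of $A$. Concretely, if $A$ is infinite, then by Lemma \ref{lem:A-is-union-if-Bsc} it contains $B_{b,c}$ for infinitely many pairs $(b,c)$ (since each $B_{b,c}$ is finite), so the parameters $b$ or $c$ are unbounded; using the shift and compactness one extracts a limit point of $A$ that fails to be eventually periodic — for instance, by concatenating longer and longer aperiodic blocks and passing to a convergent subsequence, or by a diagonal argument showing the limit has no bounded period. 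This limit lies in $A$ because $A$ is closed. Contradiction. Hence an infinite invariant closed set must contain a rich sequence.

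Second, given a rich sequence $a\in A$, I would show the orbit closure $\overline{\{\tau(a):\tau\in CAT(\Lambda_s^{\mathbb{N}})\}}$ is all of $\Lambda_s^{\mathbb{N}}$, which forces $A=\Lambda_s^{\mathbb{N}}$. The idea mirrors the construction in Lemma \ref{lem:A-is-union-if-Bsc}: richness of $a$ supplies, for each $m$, a length $k=k(m)$ such that the words $a_{[i,i+k)}$ for $i<n_m$ are pairwise distinct. Given any target $x\in\Lambda_s^{\mathbb{N}}$, I define a block map $f$ of radius $k-1$ by $f(a_{[i,i+k)})=x_i$ for $i<n_m$ and $f=0$ elsewhere; this is well defined by the distinctness, and the associated transformation $\tau$ satisfies $\tau(a)_i=x_i$ for all $i<n_m$, i.e. $d(\tau(a),x)\le\frac{1}{n_m+1}$. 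Letting $m\to\infty$ shows $x$ is in the orbit closure. Since $A$ is invariant and closed, $x\in A$; as $x$ was arbitrary, $A=\Lambda_s^{\mathbb{N}}$.

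The main obstacle is the first case — showing that an infinite invariant closed set made of eventually periodic sequences cannot exist, i.e. that infinitude of $A$ genuinely produces an aperiodic limit point. One must be careful that the limit sequence one extracts is honestly not eventually periodic and not merely of large period; this is exactly where $A$ being \emph{infinite} (not just unbounded in period) together with closedness is used, and it should follow cleanly from the characterization of $B_{b,c}$ as the set of $b',c'$-eventually periodic sequences with $b'\le b$, $c'\mid c$, combined with a compactness/diagonalization argument. Everything else is the routine block-map construction already rehearsed in the preceding lemmas.
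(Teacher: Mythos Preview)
Your handling of the rich-sequence case is exactly the paper's argument. The difference is in the all-eventually-periodic case. You aim to derive a contradiction by extracting a rich limit point and then fall back on the rich case; the paper instead proves density \emph{directly} in this case, with no contradiction and no reduction. The key observation the paper uses is that $B_{b,c}$, by its very description $\{a_0\dots a_{b-1}\overline{a_b\dots a_{b+c-1}}\}$, contains a sequence realizing \emph{any} prescribed prefix of length $b+c$. Since an infinite invariant $A$ contains $B_{b,c}$ with $b+c$ arbitrarily large (as you also note via Lemma \ref{lem:A-is-union-if-Bsc}), every finite word occurs as a prefix of some element of $A$, and density is immediate.

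Your route is not wrong, but it is a detour: the cleanest way to manufacture your desired non-eventually-periodic limit point is precisely to pick elements of $B_{b_n,c_n}\subset A$ agreeing with a fixed rich sequence on their first $b_n+c_n$ coordinates, which is the paper's density argument specialized to a single target. So your ``compactness/diagonalization'' step, once made precise, passes through the same fact about prefixes in $B_{b,c}$ that the paper uses once and for all. The paper's version is shorter because it applies that fact to an arbitrary target $x$ rather than to one rich sequence and then rerunning the block-map construction.
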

\begin{proof}
Let $A$ be a $CAT\left(\Lambda_{s}^{\mathbb{N}}\right)$-invariant
subset of $\Lambda_{s}^{\mathbb{N}}$.

Distinguish between two cases:
\begin{enumerate}
\item If $A$ contains a rich sequence we claim that $\overline{A}=\Lambda_{s}^{\mathbb{N}}$.
Let $a\in A$ be a rich sequence, $x\in\Lambda_{s}^{\mathbb{N}}$
and let $\epsilon>0$. Choose $n_{0}>\frac{1}{\epsilon}$. By the definition of a rich sequence, it is possible to find a $k\in\mathbb{N}$
 such $a_{[i,i+k)}\ne a_{[j,j+k)}$ for any $i\ne j<n_{0}$. Define a block map as follows:

\[
f\left(y_{0}y_{1}\dots y_{k}\right)=\begin{cases}
x_{i} & \mbox{if }y_{0}y_{1}\dots y_{k}=a_{[i,i+k)}\\
0 & \mbox{otherwise}\end{cases}\]
Note that for a transformation $\tau$ defined by this block map,
$\tau\left(a\right)_{[0,n_{0})}=x_{[0,n_{0})}$
and thus $d\left(\tau\left(a\right),x\right)<\epsilon$.
This holds for any $x \in\Lambda_{s}^{\mathbb{N}}$
and arbitrary $\epsilon$, therefore $\overline{A}=\Lambda_{s}^{\mathbb{N}}$.

\item Otherwise, $A$ contains only eventually periodic sequences.
Assume that $A$ is infinite, we need to show that $\overline{A}=\Lambda_{s}^{\mathbb{N}}$.
From Lemma \ref{lem:A-is-union-if-Bsc}, $A$ is a
union of $B_{b,c}$ sets. Since $\left|B_{b,c}\right|<\infty$,
 $A$ is a union of infinitely many $B_{b,c}$ sets. Thus, $A$
contains $B_{b,c}$ with arbitrary large $b+c$.

Let $x\in\Lambda_{s}^{\mathbb{N}}$ and $\epsilon>0$.
There exists $B_{b,c}\subset A$ with $b+c>\frac{1}{\epsilon}$. By
definition, $B_{b,c}$ contains all the $b+c$ prefixes, in particular,
it contains a sequence $a$ which coincide with
$x $ on the first $b+c$ coordinates: $a_{[0,b+c)}=x_{[0,b+c)}$.
Since this can be done for any $\epsilon>0$, we conclude that $\overline{A}=\Lambda_{s}^{\mathbb{N}}$.

\end{enumerate}
\end{proof}

Recall that the goal was to prove that $LCAT\left(\mathbb{F}_{p^m}^{\mathbb{N}}\right)$
is an ID semigroup. The method used in the preceding proof was building specific block maps. This was possible due to the variety of different words appearing in a rich sequences, or, in infinitely many eventually periodic sequences. To be able to build such a block map for a transformation in $LCAT\left(\mathbb{F}_{p^m}^{\mathbb{N}}\right)$
we must ensure that there are enough words, which are not only different but also linearly independent.
\begin{lemma}
\label{lem:rich-independed}Let $a\in \mathbb{F}_{p^m}^\mathbb{N} $ be a rich
sequence, then for any $n\in\mathbb{N}$ there exists a $k\left(n\right)$
such that the set of vectors $\left\{ a_{[i,i+k)}\right\} _{i=0}^{n-1}\subset\mathbb{F}_{p^m}^{k}$
is linearly independent.\end{lemma}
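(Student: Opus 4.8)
*Let $a\in \mathbb{F}_{p^m}^\mathbb{N}$ be a rich sequence, then for any $n\in\mathbb{N}$ there exists a $k(n)$ such that the set of vectors $\{a_{[i,i+k)}\}_{i=0}^{n-1}\subset\mathbb{F}_{p^m}^{k}$ is linearly independent.*

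The plan is to bootstrap from the defining property of a rich sequence (distinctness of windows) to the stronger conclusion of linear independence, by iterating window distinctness $n-1$ times and enlarging the window length at each stage. First I would recall what richness gives us directly: there is a sequence $n_m\to\infty$ so that for each $m$ one can find a length $\ell$ with $a_{[i,i+\ell)}\ne a_{[j,j+\ell)}$ for all $i\ne j<n_m$. In particular, for the given $n$, pick $m$ with $n_m\ge n$; then there is a length $\ell_0$ such that the $n$ windows $a_{[0,\ell_0)},\dots,a_{[n-1,\ell_0)}$ are pairwise distinct. This handles the base of an induction: distinctness is exactly linear independence when we are separating one coordinate, but to get full linear independence of $n$ vectors we need to "peel off" one vector at a time.

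The key inductive step: suppose for some window length $\ell$ the vectors $v_i := a_{[i,i+\ell)}$, $i=0,\dots,n-1$, are such that $v_1,\dots,v_{n-1}$ are already linearly independent (say, by the inductive hypothesis applied to a suitable sub-collection), but $v_0$ lies in their span. I want to extend all windows by appending further symbols of $a$ so that $v_0$ is pulled out of the span of the others. The observation is that extending $a_{[i,i+\ell)}$ to $a_{[i,i+\ell')}$ with $\ell' > \ell$ only adds coordinates, so independence among $v_1,\dots,v_{n-1}$ is preserved; and since $a$ is rich, the window $a_{[0,i+\ell')}$-type data contains enough distinctness that $v_0$ cannot remain in the span of the others for arbitrarily long extensions — more precisely, if $v_0$ stayed a fixed linear combination $\sum_{i\ge 1} c_i v_i$ for all lengths, then the shifted sequence $\sigma^0 a$ would satisfy a fixed linear recurrence relative to the $\sigma^i a$, forcing $a$ to be eventually periodic, contradicting Lemma~\ref{lem:A-sequence-rich-iff-not} (which says rich $\iff$ not eventually periodic). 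I would make this precise by noting that a fixed linear dependence $a_{[0,i+L)} = \sum_{i=1}^{n-1} c_i a_{[i,i+L)}$ holding for all $L$ means $a_j = \sum_{i=1}^{n-1} c_i a_{i+j}$ for all $j$, a genuine linear recurrence over $\mathbb{F}_{p^m}$, whose solution space is finite-dimensional and consists entirely of eventually periodic sequences — contradiction.

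The cleanest route is probably: apply the richness/distinctness property with parameter $n_m \ge \binom{}{}$ large enough (or simply $\ge n$) to get a length where the $n$ windows are distinct, then argue that for a sufficiently large further extension of the window length $k(n)$, any nontrivial linear relation $\sum_{i=0}^{n-1} c_i a_{[i,i+k)} = 0$ would again force $a$ to satisfy a linear recurrence, hence be eventually periodic, contradicting richness via Lemma~\ref{lem:A-sequence-rich-iff-not}. One has to be slightly careful that the recurrence argument needs the relation to hold coordinate-wise for indefinitely many coordinates, which is why we pass to a limit / use that no finite bound on $k$ can be optimal; a compactness or pigeonhole argument over the finitely many possible coefficient tuples $(c_0,\dots,c_{n-1})\in\mathbb{F}_{p^m}^n$ shows that if independence failed for every $k$, some fixed tuple would work for infinitely many (hence, extending, for all) $k$, giving the recurrence. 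The main obstacle is making this passage rigorous — transferring "dependence for each $k$" to "a single recurrence valid for all indices" — but the finiteness of $\mathbb{F}_{p^m}^n$ makes the pigeonhole clean, and then Lemma~\ref{lem:A-sequence-rich-iff-not} closes the contradiction.
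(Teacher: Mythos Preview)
Your ``cleanest route'' is correct and is essentially the paper's proof: assume dependence for every $k$, pigeonhole over the finite set of coefficient tuples in $\mathbb{F}_{p^m}^n$ to obtain a fixed nontrivial relation valid for arbitrarily large (hence all) $k$, read this off as a linear recurrence for $a$, and conclude eventual periodicity, contradicting Lemma~\ref{lem:A-sequence-rich-iff-not}. The paper performs the pigeonhole in two stages---first on the minimal index $r(k)$ of a dependent window, then on the coefficients of that dependency---while you pigeonhole directly on the full tuple; this is a cosmetic difference, and your opening inductive discussion is a detour you rightly abandon.
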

\begin{proof}
Assume by contradiction that there exists an $n_{0}\in\mathbb{N}$
such that for any $k$, $\left\{ a_{[i,i+k)}\right\} _{i=0}^{n_{0}-1}$
is linearly dependent. For any $k\in\mathbb{N}$, define $r\left(k\right)$
as the minimal integer such that $a_{[r,r+k)}$ is linearly dependent
on its predecessors ($\left\{ a_{[i,i+k)}\right\} _{i=0}^{r-1}$
are linearly independent). For any $k$, $r\left(k\right)\le n_{0}-1$ and thus
there exists a sequence $k_{l}\to\infty$ such that $r\left(k_{l}\right)=r_{0}$
is fixed.

Since the field $\mathbb{F}_{p^m}$ is finite, there is a finite number
of linear dependencies for $n_{0}$ vectors. Assume without
loss of generality that for any $k_{l}$ the $r_{0}^{\mbox{th}}$
vector depends on its predecessors by the same linear combination:
$a_{[r_{0},r_{0}+k_{l})}=\sum _{i=0}^{r_{0}-1}\beta_{i}a_{[i,i+k_{l})}$
for all $l\in\mathbb{N}$.

In particular, for each $j>r_{0}$, find $k_l>j$ and conclude that the $j^{\mbox{th}}$
coordinate satisfies $a_{j}=\sum _{i=j-r_{0}}^{j-1}\beta_{i}a_{i}$.
Therefore, for any $l$ such that $k_{l}>s^{r_{0}}$ we can find $i,j<k_{l}$
such that $a_{[i,i+r_{0})}=a_{[j,j+r_{0})}$ and thus $a_{i+r_{0}}=a_{j+r_{0}}$.
By induction, $a$ is eventually
periodic, which is a contradiction.

\end{proof}

\begin{lemma}
\label{lem:e.p.-independed}Let $a\in \mathbb{F}_{p^m}^\mathbb{N}$ be a $b,c$-eventually
periodic sequence, then there exists some $k\in\mathbb{N}$ such that
the set of vectors $\left\{ a_{[i,i+k)}\right\} _{i=0}^{b-1}\subset\mathbb{F}_{p^m}^{k}$ is linearly independent.\end{lemma}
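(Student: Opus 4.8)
The plan is to mimic the proof of Lemma \ref{lem:rich-independed}, arguing by contradiction, and to push the "eventual periodicity" conclusion into a statement that contradicts the \emph{minimality} of the parameters $b,c$ in the definition of a $b,c$-eventually periodic sequence. Recall that $a$ being $b,c$-eventually periodic means $\sigma^{b}(a)$ is $c$-periodic but $\sigma^{b-1}(a)$ is \emph{not} periodic; in particular $a_{b-1}\neq a_{b-1+c}$ (otherwise $\sigma^{b-1}(a)$ would already be $c$-periodic), and more generally the coordinates $a_{0},a_{1},\dots,a_{b-1}$ are "genuinely pre-periodic".

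First I would suppose, for contradiction, that for every $k\in\mathbb{N}$ the family $\{a_{[i,i+k)}\}_{i=0}^{b-1}\subset\mathbb{F}_{p^m}^{k}$ is linearly dependent. For each $k$ let $r(k)$ be the minimal index such that $a_{[r(k),r(k)+k)}$ lies in the span of $\{a_{[i,i+k)}\}_{i=0}^{r(k)-1}$; then $r(k)\le b-1$ for all $k$. Since $\mathbb{F}_{p^m}$ is finite there are only finitely many possible coefficient tuples, so by pigeonhole there is a subsequence $k_{l}\to\infty$ along which $r(k_{l})=r_{0}$ is constant and the dependence is realized by one fixed linear combination $a_{[r_{0},r_{0}+k_{l})}=\sum_{i=0}^{r_{0}-1}\beta_{i}\,a_{[i,i+k_{l})}$ for all $l$. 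Reading this coordinatewise, exactly as in the previous lemma, yields $a_{j}=\sum_{i=j-r_{0}}^{j-1}\beta_{i-(j-r_{0})}\,a_{i}$ for every $j>r_{0}$ (using $k_{l}>j$). Hence the whole tail of $a$ from position $r_{0}$ on is governed by a fixed linear recurrence of order $r_{0}\le b-1$.

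Now the key point: a sequence over $\mathbb{F}_{p^m}$ satisfying a linear recurrence of order $r_0$ from position $r_0$ onward, together with the fact that it is eventually periodic (which $a$ is, by hypothesis), forces the pre-period to be at most $r_{0}$. Concretely, the map sending a window $a_{[i,i+r_{0})}$ to the next window $a_{[i+1,i+1+r_{0})}$ is an affine (in fact linear) self-map of $\mathbb{F}_{p^m}^{r_{0}}$ for $i\ge r_{0}-1$, hence on the (finite, forward-invariant) orbit it is eventually \emph{injective} once one restricts to the eventual image; combined with eventual periodicity this shows $\sigma^{r_0}(a)$ is already periodic. Since $r_{0}\le b-1<b$, this contradicts the minimality of $b$ in the definition of a $b,c$-eventually periodic sequence (namely that $\sigma^{b-1}(a)$ is not periodic). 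Therefore the assumption fails: some $k$ works, and $\{a_{[i,i+k)}\}_{i=0}^{b-1}$ is linearly independent.

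The main obstacle is the last step — converting "$a$ satisfies a low-order linear recurrence on its tail and is eventually periodic" into "the pre-period is $\le r_0$". One has to be a little careful because the recurrence only holds from index $r_0$ on, so one argues on $\sigma^{r_0}(a)$: its window-shift map is linear on $\mathbb{F}_{p^m}^{r_0}$, its forward orbit is finite, and a linear map restricted to a finite forward-invariant set that is eventually periodic must make that set a single cycle (the map permutes its eventual image), whence $\sigma^{r_0}(a)$ is periodic. Everything else is a verbatim transcription of the argument in Lemma \ref{lem:rich-independed}, with $n_{0}$ replaced by $b$ and the ``contradiction with non-periodicity'' replaced by ``contradiction with minimality of $b$''.
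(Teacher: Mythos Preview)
Your approach is essentially identical to the paper's: both argue by contradiction, extract via pigeonhole a fixed $r_0\le b-1$ and fixed coefficient tuple $(\beta_i)$ along a subsequence $k_l\to\infty$, read off a linear recurrence of order $r_0$ on the tail of $a$, and then claim that $\sigma^{r_0}(a)$ is periodic, contradicting the minimality of $b$. The paper simply asserts this last step without further argument.

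Your attempt to justify that step, however, has a gap. The stated principle ``a linear map restricted to a finite forward-invariant set that is eventually periodic must make that set a single cycle'' is false: for $T(x_0,x_1)=(x_1,x_1)$ on $\mathbb{F}_2^{2}$ the forward orbit of $(1,0)$ is $\{(1,0),(0,0)\}$, which is not a cycle. Saying ``the map permutes its eventual image'' is true but does not by itself place the window $a_{[r_0,2r_0)}$ inside that eventual image. What is missing is the observation that the descending chain of images $T^n(\mathbb{F}_{p^m}^{r_0})$ is a chain of subspaces and therefore stabilizes in at most $r_0$ steps; hence $T^{r_0}(\mathbb{F}_{p^m}^{r_0})$ already equals the eventual image, $a_{[r_0,2r_0)}=T^{r_0}(a_{[0,r_0)})$ lies in it, and $T$ is bijective there, so the orbit from that point on is a pure cycle. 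An alternative clean fix: set $d_j=a_j-a_{j+c}$; then $d$ obeys the same recurrence for $j\ge r_0$ and vanishes for $j\ge b$, and solving the recurrence backward using the smallest index $i_0$ with $\beta_{i_0}\ne 0$ forces $d_j=0$ for all $j\ge i_0$, so in particular $\sigma^{b-1}(a)$ is $c$-periodic. Either addition completes your argument.
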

\begin{proof}
From Lemma \ref{lem:variaty-in-e.p} there exists a $k_{0}$ such that
all the words $\left\{ a_{[i,i+k_{0})}\right\} _{i=0}^{b-1}$ are
different%
\footnote{We can assure that all the first $b+c-1$ $k_{0}$-words in $n $
are different. Here we need only the first $b-1$ words to be different.%
}. Clearly, this holds for any $k>k_{0}$. If for some $k>k_{0}$ these
vectors are linearly independent then we are done.

Otherwise, enlarge $k_{l}\to\infty$ and repeat the argument
used in the previous lemma to get $r_{0}\le b-1$ such that $a_{[r_{0},r_{0}+k_{l})}=\sum_{i=0}^{r_{0}-1}\beta_{i}a_{[i,i+k_{l})}$.
But then $\sigma^{r_{0}}\left( a \right)$ is periodic
in contradiction to the minimality of $b$.
\end{proof}

Now we can prove Theorem \ref{thm:LCAT-over-field-is-ID}:
\subsection*{Proof of Theorem \ref{thm:LCAT-over-field-is-ID}}
Let $A\subset\mathbb{F}_{p^m}^{\mathbb{N}}$ be an infinite $LCAT\left(\mathbb{F}_{p^m}^{\mathbb{N}}\right)$-invariant set. We need to show that $A$ is dense. Let $\epsilon>0$.

If $A$ contains a rich sequence $a$, by Lemma
\ref{lem:rich-independed}, for any $n_0$, there exists a $k$, such that the first $n_{0}$
$k$-words of $a$ are linearly independent.
Choose $n_{0}>\frac{1}{\epsilon}$ and denote $k=k\left(n_{0}\right)$.
Given $x\in\mathbb{F}_{p^m}^{\mathbb{N}}$, consider
a block map that assigns to the block $a_{\left[i,i+k\right)}$ the value $x_i$ for each $i=0,\dots ,r-1$. Note that due to the independence of $\left\{ a_{[i,i+k)}\right\} _{i=0}^{n_{0}-1}$, such block maps exist and they are linear functionals on $\mathbb{F}_{p^m}^{k}$. Hence,
a transformation defined by such a block map is indeed linear and thus is a member of $LCAT\left(\mathbb{F}_{p^m}^{\mathbb{N}}\right)$.
Obviously, $d\left(\tau\left(a\right),x\right)<\epsilon$
and hence, $\overline{A}=\mathbb{F}_{p^m}^{\mathbb{N}}$.

Otherwise, $A$ is an infinite set of eventually periodic sequences.
Using Lemma \ref{lem:e.p.-independed} and a similar argument we get
the following. Let $x\in\mathbb{F}_{p^m}^{\mathbb{N}}$ and let $a\in A$ be a $b,c$-eventually periodic sequence.
We can construct a block map that is associated with $\tau\in LCAT\left(\mathbb{F}_{p^m}^{\mathbb{N}}\right)$
such that $d\left(\tau\left(a\right),x\right)<\frac{1}{b}$.
Thus, if $A$ contains a $b,c$-eventually periodic with arbitrary
large $b$, then $\overline{A}=\mathbb{F}_{p^m}^{\mathbb{N}}$.

We still need to consider the case where $A$ is an infinite set of
eventually periodic sequences with bounded parameter $b$. In this
case parameter $c$ of the sequences in $A$ in unbounded. The shift
$\sigma$ is in $LCAT\left(\mathbb{F}_{p^m}^{\mathbb{N}}\right)$ and $A$
is $LCAT\left(\mathbb{F}_{p^m}^{\mathbb{N}}\right)$-invariant, therefore
$A$ contains infinitely many periodic sequences with arbitrary large
period $c$.
Let $x\in\mathbb{F}_{p^m}^{\mathbb{N}}$ and let $a\in A$ be a $c$-periodic sequence.
Repeating the argument in Lemma \ref{lem:e.p.-independed} we can
find a $k\in\mathbb{N}$
such that all the first $c-1$ $k$-words in $a$ are linearly independent (using the minimality of $c$). Thus it is possible to construct in the same way a $\tau \in LCAT\left(\mathbb{F}_{p^m}^{\mathbb{N}}\right)$, such that $d\left(\tau\left(a\right),x \right)<\frac{1}{c}$.
Since $A$ contains sequences with arbitrary large $c$, $\overline{A}=\mathbb{F}_{p^m}^{\mathbb{N}}$.
\hfill $\Box$

\subsection*{Proof of Theorem \ref{thm:LCAT-is-ID}}
Clearly, if $s$ is prime then $LCAT\left(\Lambda_s^\mathbb{N}\right) = LCAT\left(\mathbb{F}_s^\mathbb{N}\right)$ and hence by Theorem \ref{thm:LCAT-over-field-is-ID}, $LCAT\left(\Lambda_s^\mathbb{N}\right)$ is ID.

Assume that $s=pq$ ($p,q\ne1$, they may be equal).
Let $A=p\Lambda_s^\mathbb{N}$ be the set of all sequences  $a\in \Lambda_s^\mathbb{N}$ such that $p|a_n$ for all $n$. $A\ne \Lambda_s^\mathbb{N}$ since $p$ is not invertible in $\Lambda_s$.
Note that $A$ is $LCAT\left(\Lambda_s^\mathbb{N}\right)$-invariant since each coordinate of $\tau (a)$
(for $a\in A, \tau\in LCAT\left(\Lambda_s^\mathbb{N}\right) $ ) is a linear combination of numbers that are divided by $p$.
Hence, $A$ is an infinite non-dense $LCAT\left(\Lambda_s^\mathbb{N}\right)$-invariant subset of $\Lambda_s^\mathbb{N}$, which implies that $LCAT\left(\Lambda_s^\mathbb{N}\right)$ is not ID when $s$ is not a prime.
\hfill $\Box$

\subsection*{Proof of Theorem \ref{thm:LCAT-over-field-is-MC} and Theorem \ref{thm:LCAT-is-MC}}

We prove the maximal commutativity for $LCAT\left(\Lambda_{s}^{\mathbb{N}}\right)$. For the proof of Theorem \ref{thm:LCAT-over-field-is-MC} replace $LCAT\left(\Lambda_{s}^{\mathbb{N}}\right)$ in the following by $LCAT\left(\mathbb{F}_{p^m}^{\mathbb{N}}\right)$.

Recall that $LCAT\left(\Lambda_{s}^{\mathbb{N}}\right)$ is
commutative since it is isomorphic, as a ring, to the commutative ring of polynomials over $\Lambda_{s}$.
Thus, all we have to show is maximality. Let $\tau\in CAT\left(\Lambda_{s}^{\mathbb{N}}\right)$
be a transformation such that $\tau\mu=\mu\tau$ for every $\mu\in LCAT\left(\Lambda_{s}^{\mathbb{N}}\right)$.
Given $\mu\in LCAT\left(\Lambda_{s}^{\mathbb{N}}\right)$ let $p_{\mu}\left(x\right)=\sum_{i=0}^{r}c_{i}x^{i}$ be the polynomial associated with it. Without loss of generality assume that both $\mu$ and $\tau$ are of the same radius $r$, hence $\tau\mu$ and $\mu\tau$ are of radius $\le2r$ and we may consider the radius to be exactly $2r$ (the radius in our definition is not necessarily minimal). For any block $a_{[0,2r]}\in\Lambda^{2r+1}_s$,
\begin{eqnarray*}
\tau\mu\left(a_{[0,2r]}\right) & = & \tau\left(\mu\left(a_{[0,r]}\right)\mu\left(a_{[1,r+1]}\right)...\mu\left(a_{[r,2r]}\right)\right)\\
 & = & \tau\left(\left(\sum\limits _{i=0}^{r}c_{i}a_{i}\right)\left(\sum\limits _{i=0}^{r}c_{i}a_{i+1}\right)...\left(\sum\limits _{i=0}^{r}c_{i}a_{i+r}\right)\right)\end{eqnarray*}
and
\begin{eqnarray*}
\mu\tau\left(a_{[0,2r]}\right) & = & \mu\left(\tau\left(a_{[0,r]}\right)\tau\left(a_{[1,r+1]}\right)...\tau\left(a_{[r,2r]}\right)\right)\\
 & = & \sum\limits _{i=0}^{r}c_{i}\tau\left(a_{[i,i+r]}\right)\end{eqnarray*}
From commutativity
\begin{equation}
\sum\limits _{i=0}^{r}c_{i}\tau\left(a_{[i,i+r]}\right)=\tau\left(\left(\sum\limits _{i=0}^{r}c_{i}a_{i}\right)\left(\sum\limits _{i=0}^{r}c_{i}a_{i+1}\right)\dots \left(\sum\limits _{i=0}^{r}c_{i}a_{i+r}\right)\right)\label{eq:LCA-commutative}\end{equation}
for any choice of $c_{i}\in\Lambda_{s}$ (for $i=0,...,r$) and for
all $a_{[0,2r]}\in\Lambda_{s}^{2r+1}$.
Let $c\in\Lambda_{s}$. Substituting $c_{i}=\begin{cases}
c & \, i=0\\
0 & \, i>0\end{cases}$ in \eqref{eq:LCA-commutative} yields
\[
c\tau\left(a_{[0,r]}\right)=\tau\left(\left(ca_{0}\right) \left(ca_{1}\right)...\left(ca_{r}\right)\right)=\tau\left(ca_{[0,r]}\right)\]
 for all $a_{[0,r]}\in\Lambda_{s}^{r+1}$, which asserts that $\tau$ is
homogeneous.
To show that $\tau$ is additive, we need to verify that for any $x_{0}...x_{r}$,
\[
\tau\left(x_{0}x_{1}...x_{r}\right)=\sum_{i=0}^{r}x_{i}\tau\left(e^{i}\right)\]
where $\left\{ e^{i}\right\}_{i=0}^{r} $ is the standard basis of $\Lambda_{s}^{r+1}$.
Fix $x_{[0,r]}\in\Lambda_{s}^{r+1}$. Substituting $c_{i}=x_{r-i}$
and $a_{i}=\begin{cases}
0 & \, i\ne r\\
1 & \, i=r\end{cases}$ in \eqref{eq:LCA-commutative} yields

\[
x_{r}\tau\left(0...1\right)+x_{r-1}\tau\left(0...10\right)+...+x_{0}\tau\left(1...0\right)=\tau\left(x_{0}x_{1}...x_{r}\right)\]

Note that the left hand side is equal to $\sum_{i=0}^{r}x_{i}\tau\left(e^{i}\right)$, therefore $\tau$ is additive, which proves that $\tau$ is linear, namely $\tau\in LCAT\left(\Lambda_{s}^{\mathbb{N}}\right)$.
\hfill $\Box$

\section{\label{sec:Two-sided-shift}Semigroups over one sided and two sided shift spaces}

So far we dealt only with one sided shift spaces, $\Lambda_{s}^{\mathbb{N}}$.
A cellular automaton transformation over a two sided shift space $\Lambda_{s}^{\mathbb{Z}}$
is a continuous function $\Lambda_{s}^{\mathbb{Z}}\to\Lambda_{s}^{\mathbb{Z}}$
which commutes with the left shift $\sigma$. Similar to the description
of a transformation over $\Lambda_{s}^{\mathbb{N}}$ by a block
map, there exists a two sided version of the Curtis-Hedlund-Lyndon
Theorem:
\chl
Let $\tau:\Lambda_{s}^{\mathbb{Z}}\to\Lambda_{s}^{\mathbb{Z}}$, then
$\tau\in CAT\left(\Lambda_{s}^{\mathbb{Z}}\right)$
if and only if there exists some $r\in\mathbb{N}$ and a function $f_{\tau}:\Lambda_{s}^{2r+1}\to\Lambda_{s}$, such that
\[
\tau\left(a\right)_{k}=f\left(a_{k-r}\dots a_{k-1}a_{k}a_{k+1}\dots a_{k+r}\right)\]
 for any $a\in\Lambda_{s}^{\mathbb{Z}}$ and $k\in\mathbb{N}$.
\medbreak
$f_{\tau}$ is called a block map and $r$ is the radius of $\tau$. As in the one sided case, it is often convenient to use the same notation for the block map $f_{\tau}$ and the transformation $\tau$.

Every block map of $\tau\in CAT\left(\Lambda_{s}^{\mathbb{N}}\right)$
is a valid block map for a transformation in $CAT\left(\Lambda_{s}^{\mathbb{Z}}\right)$. Under this correspondence we may regard $\tau$ as being contained in $CAT\left(\Lambda_{s}^{\mathbb{Z}}\right)$ and we may say that $CAT\left(\Lambda_{s}^{\mathbb{N}}\right)\subset CAT\left(\Lambda_{s}^{\mathbb{Z}}\right)$.
In particular, any semigroup $\Sigma\subset CAT\left(\Lambda_{s}^{\mathbb{N}}\right)$
can also be considered as a semigroup $\Sigma\subset CAT\left(\Lambda_{s}^{\mathbb{Z}}\right)$.

For $\Sigma\subset CAT\left(\Lambda_{s}^{\mathbb{Z}}\right)$ such
that $\sigma\in\Sigma$, denote $\Sigma_{+}=\Sigma\cap CAT\left(\Lambda_{s}^{\mathbb{N}}\right)$.
$\Sigma_+$ contains all the transformations in $\Sigma$ which can act also on $CAT\left(\Lambda_{s}^{\mathbb{N}}\right)$. Observe that for each $\tau \in \Sigma \backslash \Sigma_+$, if we compose it with $\sigma^r$ where $r$ is the radius of $\tau$, we get an element in $\Sigma_+$.
Note that $\Sigma_{+}$ is a sub-semigroup of $CAT\left(\Lambda_{s}^{\mathbb{N}}\right)$ (when it is non empty).

One may think of a cellular automaton as an action of a machine that reads the first $r+1$ symbols ($a_0\dots a_{r}$), assigns to them an output according to its block map and inserts it in the first coordinate. Then, the machine moves to the next $r+1$ symbols ($a_1\dots a_{r+1}$) and inserts the corresponding output in the second coordinate and so on. The machines $\tau$ and $\sigma^{z}\tau$ ($z\in\mathbb{Z}$) use the same block map and differ in the location where they insert the output. This perspective suggests to view a cellular automaton transformation as being determined by its block map rather than where the precise location of the output is. This motivates the following:

\begin{theo}
\label{thm:one->two-ID}Let $\Sigma\subset CAT\left(\Lambda_{s}^{\mathbb{N}}\right)$
be an ID semigroup, then $\left\langle \Sigma,\sigma^{-1}\right\rangle \subset CAT\left(\Lambda_{s}^{\mathbb{Z}}\right)$
is an ID semigroup.
\end{theo}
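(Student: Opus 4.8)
The plan is to transfer an arbitrary closed $\left\langle \Sigma,\sigma^{-1}\right\rangle$-invariant subset $A\subset\Lambda_s^{\mathbb{Z}}$ to a closed $\Sigma$-invariant subset of $\Lambda_s^{\mathbb{N}}$ by restricting sequences to their non-negative coordinates, apply the hypothesis that $\Sigma$ is ID on $\Lambda_s^{\mathbb{N}}$, and then use the presence of $\sigma^{-1}$ (equivalently, full two-sided shift invariance, since $\sigma\in\Sigma\subset\left\langle\Sigma,\sigma^{-1}\right\rangle$) to pull the conclusion back up to $\Lambda_s^{\mathbb{Z}}$. Concretely, let $\pi:\Lambda_s^{\mathbb{Z}}\to\Lambda_s^{\mathbb{N}}$ be the projection $\pi(a)_k=a_k$ for $k\ge 0$, and set $A_+=\pi(A)$. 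First I would check that $A_+$ is closed (it is the continuous image of the compact set $A$) and $\Sigma$-invariant: for $\tau\in\Sigma\subset CAT\left(\Lambda_s^{\mathbb{N}}\right)$ with radius $r$, the value $\tau(\pi(a))_k=f_\tau(a_k\dots a_{k+r})$ depends only on non-negative coordinates of $a$, so $\tau(\pi(a))=\pi(\tau(a))$ where on the right $\tau$ is the corresponding element of $CAT\left(\Lambda_s^{\mathbb{Z}}\right)$; since $A$ is $\Sigma$-invariant this gives $\tau(A_+)\subset A_+$.

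Next, apply the ID hypothesis: either $A_+$ is finite or $A_+=\Lambda_s^{\mathbb{N}}$. Consider the finite case first. If $A_+$ is finite then every sequence in $A$ is eventually periodic to the right with uniformly bounded period and bounded pre-period; more precisely, $A\subset\pi^{-1}(A_+)$, and using $\sigma$-invariance of $A$ together with compactness one shows that each $a\in A$ must in fact be two-sided periodic (shifting to the left and taking limits inside the closed set $A$, the bounded-complexity structure on the right forces periodicity everywhere). This confines $A$ to a finite union of shift-orbits of periodic points, hence $A$ is finite. In the case $A_+=\Lambda_s^{\mathbb{N}}$, I would argue $A=\Lambda_s^{\mathbb{Z}}$: given any $x\in\Lambda_s^{\mathbb{Z}}$ and $N\in\mathbb{N}$, pick $a\in A$ with $\pi(\sigma^{-N}x$-pattern realized, i.e. choose $a\in A$ whose coordinates $a_0\dots a_{2N}$ equal $x_{-N}\dots x_{N}$ (possible since $\pi(A)$ is everything, so every finite word occurs as a prefix of some element of $A_+=\pi(A)$), then $\sigma^{N}(a)\in A$ agrees with $x$ on $[-N,N]$; letting $N\to\infty$ and using that $A$ is closed gives $x\in A$.

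The main obstacle I expect is the finite case, i.e. ruling out genuinely two-sided sequences whose right half is eventually periodic but whose left half is not, or is periodic with a longer structure — one has to be careful that "finite on the right-projection" really does force global finiteness. The key is that $\sigma^{-1}$ (and $\sigma$) belong to the semigroup, so $A$ is invariant under the full two-sided shift; combined with $|A_+|<\infty$ this means that for each $a\in A$, all of $\{\sigma^n(a)\}_{n\in\mathbb{Z}}$ project into the finite set $A_+$, which bounds the number of distinct right-tails of all shifts of $a$, and a standard pigeonhole/recurrence argument then shows $a$ is periodic. A clean way to package this: $A_+$ finite implies $A_+\subset B_{b,c}$ for some $b,c$ (in the notation of Lemma \ref{lem:A-is-union-if-Bsc}), and then invariance of $A$ under $\sigma^{-1}$ forces $b=0$, i.e. every element of $A_+$ is purely periodic, whence every element of $A$ is two-sided periodic with period dividing $c$, so $|A|\le s^c<\infty$. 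I would also double-check the degenerate possibility $\Sigma_+=\emptyset$, but since $\Sigma\subset CAT\left(\Lambda_s^{\mathbb{N}}\right)$ by hypothesis this does not arise here.
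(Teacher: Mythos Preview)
Your argument is essentially sound and the finite-projection case is handled correctly, but there is one unjustified step: you assert $\sigma\in\Sigma$. The theorem does not assume this, and in the \emph{semigroup} $\langle\Sigma,\sigma^{-1}\rangle$ the generator $\sigma^{-1}$ does not produce $\sigma$. This bites exactly in your $A_+=\Lambda_s^{\mathbb{N}}$ case, where you need $\sigma^{N}(a)\in A$; from $\sigma^{-1}(A)\subset A$ you only get $A\subset\sigma(A)$, the wrong inclusion. The fix is not hard: note that for every $N\ge 0$ the set $\sigma^{-N}(A)$ is again closed and $\langle\Sigma,\sigma^{-1}\rangle$-invariant, so $\pi\bigl(\sigma^{-N}(A)\bigr)$ is closed and $\Sigma$-invariant; if it were finite your own finite-case argument would force $|A|=|\sigma^{-N}(A)|<\infty$, contradicting $A_+=\Lambda_s^{\mathbb{N}}$. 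Hence $\pi\bigl(\sigma^{-N}(A)\bigr)=\Lambda_s^{\mathbb{N}}$ for every $N$, and you can pick $a\in A$ directly with $a_{[-N,\infty)}=x_{[-N,\infty)}$. Your finite case, by contrast, uses only $\sigma^{-1}$-invariance and goes through as written.

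The paper takes a shorter, genuinely different route that avoids the dichotomy entirely. Rather than projecting the closed invariant set, it starts with an arbitrary infinite $Y\subset\Lambda_s^{\mathbb{Z}}$ and a target word $w_{-n}\dots w_n$, forms $X=\{\sigma^{-k}(y):y\in Y,\ k>n\}$, observes that $\pi_+(X)\subset\Lambda_s^{\mathbb{N}}$ is infinite, and applies the ID hypothesis to the $\Sigma$-orbit closure of $\pi_+(X)$ to produce $\tau\in\Sigma$ and $\tilde x=\pi_+(\sigma^{-k}(y))$ with $\tau(\tilde x)_{[0,2n]}=w$. Unwinding gives $\tau\sigma^{n-k}(y)_{[-n,n]}=w$, and since $n-k<0$ one has $\tau\sigma^{n-k}\in\langle\Sigma,\sigma^{-1}\rangle$ without ever invoking $\sigma$. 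So the paper's argument is a one-shot density computation; yours, once patched, is a more structural classification of what closed invariant sets with finite one-sided projection can look like. Both are valid, but the paper's version is noticeably cleaner.
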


\begin{theo} \label{thm:one->two-MC}
Let $\Sigma \subset CAT\left( \Lambda_s^\mathbb{N} \right)$ be a maximal commutative semigroup, then $\left\langle \Sigma,\sigma^{-1}\right\rangle$ is maximal commutative in $CAT\left( \Lambda_s^\mathbb{Z} \right)$.
\end{theo}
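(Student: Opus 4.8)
The plan is to show that $\left\langle \Sigma,\sigma^{-1}\right\rangle$ is commutative and then establish maximality. Commutativity is the easy part: $\Sigma$ is commutative by hypothesis, every element of $\Sigma$ commutes with $\sigma$ (hence with $\sigma^{-1}$ when the latter is applied and the composition is computed — note $\sigma$ is invertible on $\Lambda_s^{\mathbb{Z}}$), and $\sigma^{-1}$ commutes with itself; since every element of $\left\langle \Sigma,\sigma^{-1}\right\rangle$ is a composition $\tau\sigma^{-k}$ with $\tau\in\Sigma$ and $k\ge 0$, commutativity of the whole semigroup follows from these facts. For maximality, let $\mu\in CAT\left(\Lambda_s^{\mathbb{Z}}\right)$ commute with every element of $\left\langle \Sigma,\sigma^{-1}\right\rangle$; I must show $\mu\in\left\langle \Sigma,\sigma^{-1}\right\rangle$. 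Let $r$ be the radius of $\mu$ and consider $\mu' = \sigma^{r}\mu \in CAT\left(\Lambda_s^{\mathbb{Z}}\right)$. By the remark preceding the theorem, $\mu'$ uses the same block map as $\mu$ but reads the window $a_{0}\dots a_{2r}$ and writes at coordinate $0$, so $\mu'$ depends only on nonnegative coordinates; thus $\mu'\in CAT\left(\Lambda_s^{\mathbb{N}}\right)$, i.e. $\mu'\in\left(\left\langle\Sigma,\sigma^{-1}\right\rangle\right)_{+}$ in the notation introduced just above.

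Next I would check that $\mu'$ commutes with every element of $\Sigma$, viewed inside $CAT\left(\Lambda_s^{\mathbb{N}}\right)$. Indeed $\mu' = \sigma^{r}\mu$, and since $\mu$ commutes with $\sigma^{-1}$ (hence with $\sigma$ and with $\sigma^{r}$) and with every $\tau\in\Sigma$, we get $\mu'\tau = \sigma^{r}\mu\tau = \sigma^{r}\tau\mu = \tau\sigma^{r}\mu = \tau\mu'$ for all $\tau\in\Sigma$. This identity holds in $CAT\left(\Lambda_s^{\mathbb{Z}}\right)$, but since all the maps involved now lie in $CAT\left(\Lambda_s^{\mathbb{N}}\right)$ and the embedding $CAT\left(\Lambda_s^{\mathbb{N}}\right)\hookrightarrow CAT\left(\Lambda_s^{\mathbb{Z}}\right)$ is injective on block maps, the same identity holds in $CAT\left(\Lambda_s^{\mathbb{N}}\right)$. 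Therefore $\mu'$ is an element of $CAT\left(\Lambda_s^{\mathbb{N}}\right)$ commuting with all of $\Sigma$, and by the maximal commutativity of $\Sigma$ in $CAT\left(\Lambda_s^{\mathbb{N}}\right)$ we conclude $\mu'\in\Sigma$.

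Finally, $\mu = \sigma^{-r}\mu' \in \left\langle \Sigma,\sigma^{-1}\right\rangle$, which is exactly what was to be shown. The one subtlety to handle carefully — and the place where I expect the only real friction — is the bookkeeping around radii and the claim that $\sigma^{r}\mu$ genuinely becomes a one-sided block map: one must verify that after shifting by the radius the output at coordinate $k$ depends only on $a_{k},\dots,a_{k+2r}$, so that no negative coordinates are consulted, and that this map, when reinterpreted as an element of $CAT\left(\Lambda_s^{\mathbb{N}}\right)$ and then re-embedded into $CAT\left(\Lambda_s^{\mathbb{Z}}\right)$, recovers $\sigma^{r}\mu$ exactly. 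This is routine once one writes down the block maps explicitly, using the Curtis--Hedlund--Lyndon description in both its one-sided and two-sided forms, together with the observation (already made in the text) that $\sigma^{z}\tau$ and $\tau$ share a block map and differ only in the placement of the output.
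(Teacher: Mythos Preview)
Your proof is correct and follows essentially the same approach as the paper: shift $\mu$ by its radius to obtain a one-sided transformation, invoke the maximal commutativity of $\Sigma$ in $CAT\left(\Lambda_s^{\mathbb{N}}\right)$ to conclude $\sigma^{r}\mu\in\Sigma$, and then shift back. The paper omits the commutativity verification entirely (treating it as obvious) and is terser about the one-sided/two-sided bookkeeping, but the argument is the same.
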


\begin{theo}
\label{thm:two->one-ID}
Let $\Sigma\subset CAT\left(\Lambda_{s}^{\mathbb{Z}}\right)$
be an ID semigroup, then $\left\langle \Sigma,\sigma\right\rangle_{+} \subset CAT\left(\Lambda_{s}^{\mathbb{N}}\right)$
is an ID semigroup.
\end{theo}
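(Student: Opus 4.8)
The plan is to work throughout with $\Gamma = \langle \Sigma,\sigma\rangle$. Since $\Sigma\subseteq\Gamma$, every closed proper $\Gamma$-invariant set is in particular a closed proper $\Sigma$-invariant set, so the ID hypothesis on $\Sigma$ upgrades for free to: $\Gamma$ is ID on $\Lambda_s^{\mathbb{Z}}$; moreover $\sigma\in\Gamma$ and $\langle\Sigma,\sigma\rangle_+=\Gamma_+=\Gamma\cap CAT\left(\Lambda_s^{\mathbb{N}}\right)$. It therefore suffices to show: if $\Gamma\subseteq CAT\left(\Lambda_s^{\mathbb{Z}}\right)$ is ID and contains $\sigma$, then $\Gamma_+$ is ID on $\Lambda_s^{\mathbb{N}}$. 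I argue by contradiction: assume $A\subseteq\Lambda_s^{\mathbb{N}}$ is closed, $\Gamma_+$-invariant, proper and infinite, and from it produce an infinite proper closed $\Gamma$-invariant subset of $\Lambda_s^{\mathbb{Z}}$, contradicting ID of $\Gamma$.

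The bridge is the projection $\pi\colon\Lambda_s^{\mathbb{Z}}\to\Lambda_s^{\mathbb{N}}$, $\pi(a)_k=a_k$ for $k\ge 0$, a continuous surjection. Two facts drive the construction. First, each $\tau\in\Gamma_+$ has a right-looking block map, so $\pi(\tau a)=\tau(\pi a)$ and $A$ being $\Gamma_+$-invariant controls $\pi^{-1}(A)$. Second, the observation recorded just before the theorem: for $\tau\in\Gamma$ of radius $r$ one has $\sigma^{r}\tau\in\Gamma_+$, whence $\pi\left((\sigma^{r}\tau)a\right)=(\sigma^{r}\tau)(\pi a)\in A$ for $a\in\pi^{-1}(A)$; equivalently $\tau\left(\pi^{-1}(A)\right)\subseteq\sigma^{-r}\left(\pi^{-1}(A)\right)$. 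The naive lift $\pi^{-1}(A)$ is itself not $\Gamma$-invariant (a transformation reading negative coordinates can leave it), so I saturate and set $C=\bigcup_{n\ge 0}\sigma^{-n}\bigl(\pi^{-1}(A)\bigr)$. Using $\sigma\left(\pi^{-1}(A)\right)\subseteq\pi^{-1}(A)$ (valid because $\sigma\in\Gamma_+$) the union is increasing; using that every $\tau\in\Gamma$ commutes with $\sigma^{-1}$ together with the displacement bound above, one checks $\tau(C)\subseteq\bigcup_{n\ge 0}\sigma^{-(n+r)}\bigl(\pi^{-1}(A)\bigr)\subseteq C$, so by continuity $\overline{C}$ is closed and $\Gamma$-invariant. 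Since $\pi^{-1}(A)\subseteq C$ and $\pi$ maps onto $A$, the set $\overline{C}$ is infinite.

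The delicate point, and the main obstacle, is to show that $\overline{C}$ is proper; a priori the saturation could swell to all of $\Lambda_s^{\mathbb{Z}}$. Here I exploit that $A$ is proper and invariant under the one-sided shift $\sigma\in\Gamma_+$: a basic open set inside the complement of $A$ provides a finite word $w$ that is a prefix of no element of $A$, and forward invariance promotes this to $w$ occurring nowhere in any element of $A$, i.e.\ a globally forbidden word. Taking the periodic point $a=\overline{w}\in\Lambda_s^{\mathbb{Z}}$ aligned so that $a_{[0,|w|)}=w$, every tail $a_{-n}a_{-n+1}\cdots$ with $n\ge 0$ contains $w$ and so lies outside $A$; the same holds on an entire central cylinder about $a$, so that cylinder misses $C$ and hence $a\notin\overline{C}$. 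Thus $\overline{C}$ is an infinite, proper, closed, $\Gamma$-invariant subset of $\Lambda_s^{\mathbb{Z}}$, contradicting the ID property of $\Gamma$; therefore $A$ is finite and $\langle\Sigma,\sigma\rangle_+=\Gamma_+$ is ID. I would also remark why the dual choice $\bigcap_{n}\sigma^{n}\bigl(\pi^{-1}(A)\bigr)$ (two-sided sequences all of whose tails lie in $A$) does \emph{not} work: it is transparently proper and $\Gamma$-invariant, but it detects only the eventual image $\bigcap_n\sigma^{n}(A)$ and discards the transient points of $A$, so it may fail to be infinite — which is precisely why the increasing union, rather than the intersection, is the correct object.
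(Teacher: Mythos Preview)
Your argument has a fatal gap at the properness step: in fact $\overline{C}=\Lambda_s^{\mathbb{Z}}$ whenever $A\neq\emptyset$. Unwinding your definition, $c\in C$ if and only if there exists $n\ge 0$ with $(c_n,c_{n+1},\dots)\in A$. Now take any point $c^{*}\in\Lambda_s^{\mathbb{Z}}$ and any central cylinder $U$ determined by the coordinates in $[-N,N]$. Pick any $a'\in A$ and define $c$ by $c_k=c^{*}_k$ for $k\le N$ and $(c_{N+1},c_{N+2},\dots)=a'$; then $c\in U\cap C$. Thus every open set meets $C$ and $\overline{C}$ is the whole space. Your cylinder argument around $\overline{w}$ cannot work: a cylinder controls only finitely many coordinates, whereas membership in $C$ asks merely that \emph{some} (arbitrarily far) right tail lie in $A$. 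The forbidden word $w$ rules out tails starting inside the window, but says nothing about tails starting beyond it. (Your phrasing ``tails $a_{-n}a_{-n+1}\cdots$'' also suggests a sign slip; with your definition of $C$ the relevant tails start at non\-negative indices.)

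Ironically, the object you dismiss in your last paragraph is the correct one. When $A$ is closed and $\sigma$-invariant, the set $D=\bigcap_{n\ge 0}\sigma^{n}\bigl(\pi^{-1}(A)\bigr)$ coincides with the paper's set
\[
Y=\bigl\{\,y\in\Lambda_s^{\mathbb{Z}}:\ \forall n\ \exists x\in A,\ m\in\mathbb{N},\ y_{[-n,n]}=x_{[m,m+2n]}\,\bigr\},
\]
and one checks easily that $D$ is closed, $\Gamma$-invariant (via $\sigma^{r}\tau\in\Gamma_{+}$), and proper (the periodic point $\overline{w}$ is excluded). The entire difficulty of the theorem is precisely the step you waved away: showing that $D$ is infinite. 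The paper does this by a careful argument ruling out the possibility that $Y$ consists only of finitely many periodic points, using the minimality of the eventual-period parameters of elements of $A$. Without that argument, no lift of $A$ with the required four properties (closed, proper, $\Gamma$-invariant, infinite) has been produced.
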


\begin{theo} \label{thm:two->one-MC}
Let $\Sigma \subset CAT\left( \Lambda_s^\mathbb{Z} \right)$ be a maximal commutative semigroup then $\Sigma_{+}$ is maximal commutative in $CAT\left( \Lambda_s^\mathbb{N} \right)$.
\end{theo}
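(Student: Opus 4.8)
The plan is to deduce maximal commutativity of $\Sigma_+$ in $CAT\left(\Lambda_s^{\mathbb{N}}\right)$ directly from that of $\Sigma$ in $CAT\left(\Lambda_s^{\mathbb{Z}}\right)$, using that $\sigma$ and $\sigma^{-1}$ are central in $CAT\left(\Lambda_s^{\mathbb{Z}}\right)$. First I would note that $\sigma$ and $\sigma^{-1}$, being cellular automaton transformations, commute with every element of $CAT\left(\Lambda_s^{\mathbb{Z}}\right)$; in particular they commute with $\Sigma$, so maximality of $\Sigma$ forces $\sigma,\sigma^{-1}\in\Sigma$. Thus $\Sigma_+$ is well defined (this is exactly what is needed for the notation $\Sigma_+$ to make sense) and contains $\sigma$, so it is a non-empty subsemigroup of $\Sigma$; since composition in $CAT\left(\Lambda_s^{\mathbb{N}}\right)$ agrees with composition in $CAT\left(\Lambda_s^{\mathbb{Z}}\right)$ under the identification $CAT\left(\Lambda_s^{\mathbb{N}}\right)\subset CAT\left(\Lambda_s^{\mathbb{Z}}\right)$, commutativity of $\Sigma_+$ is inherited from $\Sigma$.

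For maximality, let $\mu\in CAT\left(\Lambda_s^{\mathbb{N}}\right)$ commute with every element of $\Sigma_+$ inside $CAT\left(\Lambda_s^{\mathbb{N}}\right)$, and regard $\mu$ as an element of $CAT\left(\Lambda_s^{\mathbb{Z}}\right)$. The one technical point I would record is that for $\tau_1,\tau_2\in CAT\left(\Lambda_s^{\mathbb{N}}\right)$ the relation $\tau_1\tau_2=\tau_2\tau_1$ holds in $CAT\left(\Lambda_s^{\mathbb{N}}\right)$ if and only if it holds in $CAT\left(\Lambda_s^{\mathbb{Z}}\right)$: in either setting it is equivalent to the equality of the two composite block maps, which are the same finite functions of finitely many coordinates regardless of the index set. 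Consequently $\mu$ commutes with all of $\Sigma_+$ inside $CAT\left(\Lambda_s^{\mathbb{Z}}\right)$ as well. Now let $\tau\in\Sigma$ be arbitrary; if $\tau\in\Sigma_+$ there is nothing more to check, and otherwise, writing $r$ for the radius of $\tau$, the discussion preceding this theorem shows $\sigma^{r}\tau\in\Sigma_+$. Then $\mu$ commutes with $\sigma^{r}\tau$ by the above and with $\sigma^{-r}$ by centrality, hence with $\tau=\sigma^{-r}\left(\sigma^{r}\tau\right)$. So $\mu$ commutes with every element of $\Sigma$, and maximal commutativity of $\Sigma$ yields $\mu\in\Sigma$, whence $\mu\in\Sigma\cap CAT\left(\Lambda_s^{\mathbb{N}}\right)=\Sigma_+$, as required.

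The argument is short, so there is no genuine obstacle; the only thing that needs care is the faithful transfer of composition — and therefore of commutation relations — between the one-sided and two-sided pictures, together with the observation that the $\sigma^{r}$-trick used to land in $\Sigma_+$ respects commutation with $\mu$. Both are routine once the block-map identification recorded earlier in this section is in place.
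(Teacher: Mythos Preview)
Your argument is correct and follows essentially the same route as the paper: after observing that $\sigma,\sigma^{-1}\in\Sigma$ by maximality, you take $\mu\in CAT\left(\Lambda_s^{\mathbb{N}}\right)$ commuting with $\Sigma_+$, use the $\sigma^{r}$-trick to land an arbitrary $\tau\in\Sigma$ inside $\Sigma_+$, and then cancel the shift to conclude $\mu\tau=\tau\mu$, exactly as the paper does. Your version is a bit more explicit about why commutation relations transfer between the one-sided and two-sided settings, but the strategy is identical.
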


Denote by $\pi_{+}:\Lambda_{s}^{\mathbb{Z}}\to\Lambda_{s}^{\mathbb{N}}$
the projection on the positive coordinates:
$$\pi_{+}\left(\left\{ a_{n}\right\} _{-\infty}^{\infty}\right)=\left\{ a_{n}\right\} _{0}^{\infty}.$$

Note that these statements are sharp in the following sense: In Theorem \ref{thm:one->two-ID}, if we delete $\sigma^{-1}$, then  $\Sigma\subset CAT\left(\Lambda_{s}^{\mathbb{Z}}\right)$ is not an ID semigroup (the set of all sequences such that $\pi_+\left(a\right)$ is 1-periodic is invariant). Clearly, it is necessary to add $\sigma^{-1}$ in Theorem \ref{thm:one->two-MC}. The statement of Theorem \ref{thm:two->one-ID} is meaningless without adding $\sigma$ since $\Sigma_+$ may be empty.

\begin{definition}
A sequence $a\in\Lambda_{s}^{\mathbb{Z}}$ is
\textbf{$c$-periodic} if for any $z\in\mathbb{Z},$ $a_{z}=a_{z+c}$, where
$c$ is minimal with this property.
\end{definition}

\subsection*{Proof of Theorem \ref{thm:one->two-ID}}

Let $Y\subset \Lambda_{s}^{\mathbb{Z}}$ be an infinite set, and let $w_{-n}\dots w_0 \dots w_n$ be some finite word of length $2n+1$. Define $X=\left\{\sigma^{-k}(y)|y\in Y,k>n\right\}$ and note that $\pi_+\left(X\right)$ is an infinite subset of $\Lambda_s^\mathbb{N}$. By the fact that $\Sigma$ is ID, there exist $\tilde{x}\in\pi_+\left(X\right)$ and $\tau\in \Sigma$ such that $\tau(\tilde{x})_{[0,2n]}= w_{-n}\dots w_0 \dots w_n$ and $\tilde{x}=\sigma^{-k}(y)$ where $k>n$ and $y\in Y$.
That is $\tau\sigma^{-k+n}(y)_{[-n,n]}=w_{-n}\dots w_0 \dots w_n$. Since $\tau\sigma^{-k+n}\in \left\langle \Sigma,\sigma^{-1}\right\rangle$ and $w_{-n}\dots w_0 \dots w_n$ is an arbitrary word, we conclude that $\left\langle \Sigma,\sigma^{-1}\right\rangle$ is ID.
\hfill $\Box$

\subsection*{Proof of Theorem \ref{thm:one->two-MC}}

Let $\tau\in CAT\left(\Lambda_{s}^{\mathbb{Z}}\right)$ with radius
$r$ and consider $\tau\sigma^{r}$. Note that for every $k\in\mathbb{Z}$,
the $k^{\mbox{th}}$ coordinate $\tau\sigma^{r}\left(a\right)_{k}$
is dependent only on $a_{[k,k+2r+1]}$. Thus, $\tau\sigma^{r}$ can act also on the one sided shift space, hence $\tau\sigma^{r}\in CAT\left(\Lambda_{s}^{\mathbb{N}}\right)$
(with radius $\le2r$).

Assume that $\tau$ commutes with all $\Sigma'=\left\langle \Sigma,\sigma^{-1}\right\rangle$,
we need to show that $\tau\in\Sigma'$.
In particular, $\tau$ commutes with $\Sigma$. Let $\mu\in\Sigma$,
then $\tau\mu=\mu\tau$ and hence $\sigma^{r}\tau\mu=\mu\sigma^{r}\tau$. Since $\sigma^{r}\tau\in CAT\left(\Lambda_{s}^{\mathbb{N}}\right)$
and $\Sigma$ is MC, $\sigma^{r}\tau\in\Sigma$,
which implies that $\tau\in\Sigma'ê$.
\hfill $\Box$

\subsection*{Proof of Theorem \ref{thm:two->one-ID}}

Let $X\subset\Lambda_{s}^{\mathbb{N}}$ an infinite set, and let $Y\subset\Lambda_{s}^{\mathbb{Z}}$
be the set $Y=\left\{ y|\forall n\in\mathbb{N},\exists x\in X,m\in\mathbb{N},y_{\left[-n,n\right]}=x_{\left[m,m+2n\right]}\right\} $,
that is, every symmetric word in an element of $Y$, appears as a word in some element of $X$.

First we show that $Y\ne\emptyset$. Given $x\in X$ we construct
$y\in Y$: There exists a symbol $\eta$ that appears infinitely many
times in $x$. Define $y_{0}=\eta$. There exist two symbols
$\zeta$ and $\theta$, such that the word $\zeta\eta\theta$ appears infinitely
many times in $x$. Choose $y_{\left[-1,1\right]}=\zeta\eta\theta$. Continue
with this process to get that $y\in Y$. In particular, $Y\ne\emptyset$
and for every $x\in X$ and for arbitrary large length $m$, there
exists this $y$ that shares a word of length $m$ with $x$.

 Assume that $\left|Y\right|=\infty$. Since $\Sigma$ is ID,
$\Sigma\left(Y\right)=\left\{ \tau\left(y\right)\right\} _{\tau\in\Sigma,y\in Y}$
is dense, that is, any arbitrarily large symmetric word appears in some
element of $\Sigma\left(Y\right)$. Let $w_{0}\dots w_{k}\in\Lambda_{s}^{k+1}$
be a word. Find $\tau$ and $y$ such that $\tau\left(y\right)_{\left[0,k\right]}=w_{0}\dots w_{k}$
and denote by $r$ the radius of $\tau$. By the definition of $Y$,
there exists $x\in X$ and a length $m>0$, such that $y_{\left[r,k+3r\right]}=x_{\left[m,m+k+2r\right]}$. Note that $\tau\in\Sigma$ implies that $\tau\sigma^{r}\in\left\langle \Sigma,\sigma\right\rangle _{+}$,
and $\tau\sigma^{r}\left(x\right)_{\left[m,m+k\right]}=\tau\sigma^{r}\left(y\right)_{[r,k+r]}=\tau\left(y\right)_{\left[0,k\right]}$.
Hence $\tau\sigma^{r+m}\left(x\right)_{\left[0,k\right]}=w_{0}\dots w_{k}.$
 Since $\tau\sigma^{r+m}\in\left\langle \Sigma,\sigma\right\rangle _{+}$
and the word $w_{0}\dots w_{k}$ is arbitrary, we conclude that $\left\langle \Sigma,\sigma\right\rangle _{+}$
is ID.

Therefore, it is enough to show that $\left|Y\right|=\infty$. Assume by contradiction
that $\left|Y\right|<\infty$. Since by definition $Y$ is $\sigma$-invariant,
it consists only of periodic sequences.

Suppose that there exists $y\in Y$ such that for arbitrary large
$k$, there exists $x^{\left(k\right)}\in X$ and $n>k$ with $y_{\left[-k,-1\right]}=x_{\left[-k+n,n-1\right]}^{\left(k\right)}$
but $y_{0}\ne x_{n}^{\left(k\right)}$. We can assume without loss
of generality that $x_{n}^{\left(k\right)}$ is the same symbol,
for all $k$. Thus there exists $y'\in Y$
such that $y'_{(\infty,-1]}=y_{(\infty,-1]}$ but $y'_{0}\ne y_{0}$.
Now either $y$ or $y'$ is non periodic which is contradiction.

Hence, for every $y\in Y$ there exists a length $M\left(y\right)$
such that if $y_{\left[-k,-1\right]}=x_{\left[-k+n,n-1\right]}$ (for
$x\in X,k>M\left(y\right),n>k$) then $y_{0}=x_{n}$. Denote by $M=\max_{y\in Y}\left\{ M\left(y\right)\right\}$.
If $x\in X$ shares a word of length $M$ with some element
in $Y$ then they are right asymptotic ($\exists n\in\mathbb{N},z\in\mathbb{Z}$ such that $x_{[n,\infty)}=y_{[z,\infty)}$). But we saw that every $x\in X$
shares arbitrarily long words with some $y\in Y$ so every $x\in X$ is right
asymptotic to some $y\in Y$.

Now, since $Y$ contains only periodic sequences, there are infinitely
many sequences $\left\{ x^{\left(n\right)}\right\} \subset X$ that
are eventually periodic with the same period. Moreover, we can assume that they are all right asymptotic. $x^{\left(n\right)}$
is $b^{\left(n\right)},c$-eventually periodic, where $b^{\left(n\right)}\to\infty$.
By the minimality of the parameters, the sequence $x_{[b\left(n\right)-1,\infty)}^{\left(n\right)}$
is not periodic (while $x_{[b\left(n\right),\infty)}^{\left(n\right)}$
is periodic). Define $y\in\Lambda_{s}^{\mathbb{Z}}$ by $y_{[0,\infty)}=x_{[b\left(n\right),\infty)}^{\left(n\right)}$.
Consider the symbols appearing in $\{x_{b\left(n\right)-1}^{\left(n\right)}\}$ (where $b(n)>0$).
Without loss of generality all of these are the same symbol $\eta$
and define $y_{-1}=\eta$. Consider the symbols appearing in $\{x_{b(n)-2}^{(n)}\}$
for such $x^{\left(n\right)}$ where $b\left(n\right)>1$. One of
the symbols appears infinitely many times and set $y_{-2}$ to be
that symbol. Continue with this process to get $y\in\Lambda_{s}^{\mathbb{Z}}$.
By the construction, $y\in Y$ and it is non periodic which is contradiction.
We conclude that $Y$ is infinite and the proof is completed.

\hfill $\Box$

\subsection*{Proof of Theorem \ref{thm:two->one-MC}}

Let $\tau\in CAT\left(\Lambda_{s}^{\mathbb{N}}\right)$ such that $\tau$
commutes with all $\Sigma_{+}$.

First we show that $\tau$, as a transformation in $CAT\left(\Lambda_{s}^{\mathbb{Z}}\right)$,
commutes with all $\Sigma$. Let $\mu\in\Sigma$ be a transformation with radius $r$. Since $\Sigma$ is MC it must contain $\sigma$ and thus $\mu\sigma^{r}\in\Sigma_{+}$. $\tau$ commutes with all the elements of $\Sigma_{+}$, in particular $\mu\sigma^{r}\tau=\tau\mu\sigma^{r}$. $\sigma^r$ commutes with both $\tau,\mu$, and $\sigma^r$ is invertible, hence $\mu\tau=\tau\mu$. The MC of $\Sigma$ implies that $\tau\in\Sigma$. Thus $\tau\in\Sigma\cap CAT\left(\Lambda_{s}^{\mathbb{N}}\right)=\Sigma_{+}$.
\hfill $\Box$

\section{\label{sec:The-Action-of-Aut}The Action of the Automorphism Group }

Denote the group of all the automorphisms of the dynamical system
$\left(\Lambda_{s}^{\mathbb{Z}},\sigma\right)$ by $Aut\left(\Lambda_{s}^{\mathbb{Z}}\right)$.
$Aut\left(\Lambda_{s}^{\mathbb{Z}}\right)$ consists of all the cellular automata transformations which are bijective functions.

The automorphism group is {}``large'' in the algebraic sense.
Hedlund \cite{hedlund1969endomorphisms} showed that $Aut\left(\Lambda_{s}^{\mathbb{Z}}\right)$ contains a copy of every finite group, and Boyle Lind and Rudolph \cite{boyle1988automorphism} showed that is contains free groups. Recently
$Aut\left(\Lambda_{s}^{\mathbb{Z}}\right)$ has been extensively studied
and a rich algebraic theory has been developed. The automorphism group is studied usually for automorphisms of subshift of finite type, as well as for automorphism group of multidimensional shifts of finite type (see \cite{hochman2010automorphism}).

The automorphism group is {}``large'' in the dynamical sense as well.
Boyle, Lind and Rudolph \cite{0881.58035} showed that $\Lambda_{s}^{\mathbb{Z}}$ does not contain a proper infinite closed set which is $Aut\left(\Lambda_{s}^{\mathbb{Z}}\right)$-invariant. This means in our terms that $Aut\left(\Lambda_{s}^{\mathbb{Z}}\right)$ is an ID semigroup. Their theorem is more general and holds for automorphism group of any mixing shift of finite type (see \cite{0881.58035}). Consider the case of the full shift space when $s$ is not a power of prime. A stronger version of BLR Theorem, for that particular case, can be derived from previous results in this article.

In section \ref{sec:Arithmetic-semi-groups} we defined $\mu_{u}\in CAT\left(\Lambda_{s}^{\mathbb{N}}\right)$ ($u|s^n$ for some $n$). As already mentioned (section \ref{sec:Two-sided-shift}) $\mu_{u}$ can be viewed
as a transformation in $CAT\left(\Lambda_{s}^{\mathbb{Z}}\right)$.
As such, $\mu_{u}\in CAT\left(\Lambda_{s}^{\mathbb{Z}}\right)$ is both onto and injective (note that $\left(\mu_{u}\right)^{-1}=\sigma^{-1}\mu_{\frac{s}{u}}$)). Clearly, as a transformation acting on a one sided shift space ($\mu_{p}\in CAT\left(\Lambda_{s}^{\mathbb{N}}\right)$), is onto but not injective.

\begin{theo}
$\left\langle \mu_{p},\sigma,\sigma^{-1}\right\rangle $ is an ID semigroup of $CAT\left(\Lambda_{s}^{\mathbb{Z}}\right)$ where $p|s$ is prime and $s$ is not a power $p$.

In particular, $Aut\left(\Lambda_{s}^{\mathbb{Z}}\right)$ is an ID semigroup where $s$ is not a power of prime.
\end{theo}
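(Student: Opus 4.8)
The plan is to combine the machinery already assembled in the paper. The first statement claims that $\left\langle \mu_p,\sigma,\sigma^{-1}\right\rangle \subset CAT\left(\Lambda_s^{\mathbb{Z}}\right)$ is ID when $p\mid s$ is prime and $s$ is not a power of $p$. I would observe that $\left\langle \mu_p,\sigma,\sigma^{-1}\right\rangle = \left\langle \left\langle \mu_p,\sigma\right\rangle, \sigma^{-1}\right\rangle$, where $\left\langle \mu_p,\sigma\right\rangle \subset CAT\left(\Lambda_s^{\mathbb{N}}\right)$. By Theorem \ref{thm:multi-is-ID}, since $s$ is not a power of prime (indeed $s$ is not even a power of $p$, and it has a second prime factor), the semigroup $\left\langle \mu_p,\sigma\right\rangle$ is ID in $CAT\left(\Lambda_s^{\mathbb{N}}\right)$. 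Then Theorem \ref{thm:one->two-ID} applies verbatim: adjoining $\sigma^{-1}$ to an ID semigroup on the one-sided shift yields an ID semigroup on the two-sided shift. This immediately gives the first assertion.

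For the second assertion, that $Aut\left(\Lambda_s^{\mathbb{Z}}\right)$ is ID whenever $s$ is not a power of a prime, the idea is to exhibit $\left\langle \mu_p,\sigma,\sigma^{-1}\right\rangle$ as a sub-semigroup of $Aut\left(\Lambda_s^{\mathbb{Z}}\right)$ and then invoke the elementary fact (already used in Section \ref{sec:Arithmetic-semi-groups}) that a semigroup containing an ID sub-semigroup is itself ID — any closed invariant proper subset for the larger semigroup is in particular invariant for the sub-semigroup, hence finite. So it suffices to check that $\mu_p$, $\sigma$, and $\sigma^{-1}$ all lie in $Aut\left(\Lambda_s^{\mathbb{Z}}\right)$, i.e. are bijective cellular automata on the two-sided shift. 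For $\sigma$ and $\sigma^{-1}$ this is clear. For $\mu_p$ one uses the remark recorded just before the theorem: $\mu_p \in CAT\left(\Lambda_s^{\mathbb{Z}}\right)$ is both onto and injective, with explicit inverse $(\mu_p)^{-1} = \sigma^{-1}\mu_{s/p}$ (which also lies in the semigroup-with-shifts, though we only need bijectivity here). Since $s$ is not a power of a prime, we may pick a prime factor $p\mid s$ with $s$ not a power of $p$, so the first part of the theorem applies and $\left\langle \mu_p,\sigma,\sigma^{-1}\right\rangle$ is ID; being a subset of $Aut\left(\Lambda_s^{\mathbb{Z}}\right)$, it forces the latter to be ID as well.

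I do not anticipate a serious obstacle: the content has been front-loaded into Theorems \ref{thm:multi-is-ID} and \ref{thm:one->two-ID}, and the present statement is essentially their composition together with the bijectivity of $\mu_p$ on $\Lambda_s^{\mathbb{Z}}$. The only point demanding a line of care is making sure the hypothesis of Theorem \ref{thm:multi-is-ID} is met in the form needed — namely that choosing $p$ to be \emph{some} prime factor of $s$ (when $s$ is not a prime power) indeed makes $s$ fail to be a power of $p$, which is immediate since $s$ then has at least two distinct prime factors. One might also remark why this is \emph{stronger} than the Boyle–Lind–Rudolph theorem for the full shift: BLR uses the full automorphism group, whereas here the ID property is already witnessed by the three-generator sub-semigroup $\left\langle \mu_p,\sigma,\sigma^{-1}\right\rangle$; this observation belongs in a remark following the proof rather than in the proof itself.
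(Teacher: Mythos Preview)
Your proposal is correct and follows essentially the same route as the paper: apply Theorem~\ref{thm:multi-is-ID} to get that $\langle \mu_p,\sigma\rangle$ is ID on the one-sided shift, then Theorem~\ref{thm:one->two-ID} to pass to the two-sided shift, and finally use $\mu_p,\sigma,\sigma^{-1}\in Aut(\Lambda_s^{\mathbb{Z}})$ together with the observation that any semigroup containing an ID sub-semigroup is ID. The paper's proof is exactly this, only more tersely stated.
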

\begin{proof}
Let $s=pq$ where $p$ is a prime that does not divide $q$. From Theorem \ref{thm:multi-is-ID}, $\left\langle \mu_{p},\sigma\right\rangle$ is an ID semigroup of $CAT\left(\Lambda_{s}^{\mathbb{N}}\right)$.
Hence, by Theorem \ref{thm:one->two-ID}, $\left\langle \mu_{p},\sigma,\sigma^{-1}\right\rangle $
is an ID semigroup of $CAT\left(\Lambda_{s}^{\mathbb{Z}}\right)$.
$\mu_{p}\in Aut\left(\Lambda_{s}^{\mathbb{Z}}\right)$
and clearly $\sigma,\sigma^{-1}\in Aut\left(\Lambda_{s}^{\mathbb{Z}}\right)$,
thus, $Aut\left(\Lambda_{s}^{\mathbb{Z}}\right)$ is an ID semigroup as it contains the ID sub-semigroup $\left\langle \mu_{p},\sigma,\sigma^{-1}\right\rangle $.
\end{proof}

\section{\label{sec:ID-and-Maximal}ID and Maximal Commutativity}

Let $\Omega$ be a shift space, $\Sigma\subset CAT\left(\Omega\right)$ a semigroup and let $\Sigma'$ be a sub-semigroup of $\Sigma$. Clearly, any $\Sigma$-invariant subset of $\Omega$ is in particular $\Sigma'$-invariant. Therefore, any semigroup which contains an ID semigroup is also ID.

On the other hand, the MC property in not preserved under inclusion (adding new elements will break down the commutativity) or by moving to smaller semigroup (removing elements will break the maximality down).

A semigroup which is ID is {}``large'' in a dynamical sense, while the MC property indicates {}``largeness'' in an algebraic sense.

However, neither of the properties implies the other: the semigroup $\left\langle \mu_{2},\sigma\right\rangle \subset CAT\left(\Lambda_{10}^\mathbb{N}\right) $ (section \ref{sec:Arithmetic-semi-groups}) is ID but not MC. While $LCAT\left(\Lambda_s^\mathbb{N}\right)$ is not ID when $s$ is not prime, but it is MC for any $s$.

Although ID and MC are properties that indicate {}``largeness'' we saw a semigroup generated by only two transformations which is ID. In the following, we construct a semigroup generated by two transformations which is MC (and not ID).

Consider the case $s=2$. As in section \ref{sec:Algebraic-Semigroups} one can treat $\Lambda_{2}$ as the field $\frac{\mathbb{Z}}{2\mathbb{Z}}$. Let $\tau\in CAT\left(\Lambda_{2}^{\mathbb{Z}}\right)$ be defined by
\[ \tau \left(a\right)_k=a_{k}+\prod\limits _{i=1}^{r}\left(a_{k+i}+\delta_{i}\right)\]
where addition and multiplication are defined in $\frac{\mathbb{Z}}{2\mathbb{Z}}$, $\delta_{i}\in\frac{\mathbb{Z}}{2\mathbb{Z}}$ ($i=1,...,r$) and the radius $r\ge2$.

Define the \textbf{least period of $\tau$} by the minimal integer $m$ such that $\delta_{i}=\delta_{i+m}$ for $i=1,2,..,r-m$.

Coven, Hedlund and Rhodes \cite{coven1979commuting} solved the {}``Commuting Block Maps Problem'' for several classes of block maps, in particular for the block maps of the form \textbf{that defines $\tau$} above. They showed that the set transformations commuting with $\tau$ depends on whether $m\le\frac{r}{2}$ or not.
We rephrase their solution for transformations of the form of $\tau$ with $m\le\frac{r}{2}$ using the terminology of the present article:
\spt
Let $\tau\in CAT\left(\Lambda_{2}^{\mathbb{Z}}\right)$
be defined by
\[
\tau\left(a\right)_k=a_{k}+\prod\limits _{i=1}^{r}\left(a_{k+i}+\delta_{i}\right)\]
where the least period of $\tau \le\frac{r}{2}$. Then $\left\langle \tau,\sigma, {\sigma}^{-1}, Id\right\rangle \subset CAT\left(\Lambda_{2}^{\mathbb{Z}}\right)$
is MC.
\medbreak

Consider $\tau$ with the following parameters: $r=2$ and $\delta_{1}=\delta_{2}=0$. The resulting transformation is $\tau\left(a\right)_{k}=a_{k}+a_{k+1}a_{k+2}$.

$\tau$ {}``changes'' the $k^{\mbox{th}}$ coordinate
if and only if $a_{k+1}=a_{k+2}=1$. For each $i\in\mathbb{Z}$, define $e^{i}\in\Lambda_{2}^{\mathbb{Z}}$
to be the sequence with 1 in the $i^{\mbox{th}}$ coordinate and 0 elsewhere. The consecutive pair $11$ does not appear in $e^{i}$ and therefore $\tau\left(e^{i}\right)=e^{i}$ for all $i\in\mathbb{Z}$. Define $A=\overline{\left\{ e^{i}\right\} _{i\in\mathbb{Z}}}=\left\{ e^{i}\right\} _{i\in\mathbb{Z}} \cup \ \left\{ 0\right\}$ (where $\left\{ 0\right\} =000\dots$). $A\subset \Lambda_{2}^{\mathbb{Z}}$ is an infinite closed $\left\langle \tau,\sigma\right\rangle $-invariant
proper subset of $\Lambda_{2}^{\mathbb{Z}}$, which asserts that $\left\langle \tau,\sigma\right\rangle $
is not an ID semigroup.
\medbreak

By Theorem \ref{thm:two->one-MC} and the Small Period Theorem we have that $\left\langle \tau,\sigma, Id \right\rangle$ is MC in $CAT\left(\Lambda_2^{\mathbb{Z}}\right)ê$. This is an example of an MC semigroup over the one-sided shift space which is generated by $3$ elements only.
Clearly, since every transformation commutes with $\sigma$ and $Id$, therefore, $\left\langle \tau,\sigma, Id\right\rangle \subset CAT\left(\Lambda_{2}^{\mathbb{Z}}\right)$ is of minimal number of generators for a semigroup having the MC property over $\Lambda_2$.

Let $\Sigma\subset CAT\left(\Lambda_s^\mathbb{N}\right)$ be an MC semigroup. For any $s$, the minimal number of generators of $\Sigma$ is at least $3$. For $s=2$ we saw that this number is actually $3$.
\begin{problem}
What is the minimal number of generators for a semigroup having the MC property for any $s$?
\end{problem}

For $s$ which is not a power of a prime, we constructed a semigroup in $CAT\left(\Lambda_s^\mathbb{N}\right)$ generated by 2 elements which is ID. A semigroup $\Sigma$ is called cyclic if $\Sigma=\left\langle \tau \right\rangle$ for some $\tau$.
\begin{problem}
Is it true that a cyclic semigroup over  $\Lambda_s^\mathbb{N}$ ($s\ge 2 $) cannot have the ID property?

What is the minimal number of generators for an ID semigroup over $\Lambda_{s}^\mathbb{N}$ for general $s$?
\end{problem}

By the theorems in section \ref{sec:Two-sided-shift} one can see that the questions above, do not change significantly, when asking these questions about a semigroup over a two sided shift space.

Although the ID property does not always follow from MC, it may be so for certain MC semigroups. Therefore we pose the following interesting question.
\begin{problem}
Let $\Omega$ be a shift space and $\Sigma\subset CAT\left(\Omega\right)$
a maximal commutative semigroup of cellular automata transformations. Are there any
conditions on $\Sigma$ which imply that $\Sigma$ has the ID property?
\end{problem}

\section{Acknowledgements}
This article is based on an M.Sc. thesis submitted to the Hebrew University, Jerusalem. I would like to thank my advisor, Prof. Hillel Furstenberg, for introducing me to the area and for sharing with me the joy of doing mathematics. I would like also to thank Dr. Uri Shapira and Dr. Jehuda Hartman for their help. Special thanks are due to the referee for his helpful suggestions and improvements in several proofs.

\bibliographystyle{plain}
\bibliography{Thesis}

\end{document}